\numberwithin{equation}{section}
\theoremstyle{plain}
\newtheorem{thm}{Theorem}[section]
\newtheorem{lem}[thm]{Lemma}
\newtheorem{cor}[thm]{Corollary}
\newtheorem{prop}[thm]{Proposition}
 \theoremstyle{definition}
\newtheorem{defn}[thm]{Definition}
\newtheorem{rem}[thm]{Remark}
\newtheorem{ques}[thm]{Question}
\newtheorem{setup}[thm]{Setup}
\newcommand{\ovl}{\overline}
\newcommand{\mb}[1]{\mathbb{#1}}
\newcommand{\mc}[1]{\mathcal{#1}}
\newcommand{\mr}[1]{\mathrm{#1}}
\newcommand{\Gal}{\operatorname{Gal}}
\newcommand{\Char}{\operatorname{char}}
\newcommand{\Spec}{\operatorname{Spec}}
\newcommand{\Tr}{\operatorname{Tr}}
\newcommand{\pcoor}[1]{%
  \begingroup\lccode`~=`: \lowercase{\endgroup
  \edef~}{\mathbin{\mathchar\the\mathcode`:}\nobreak}%
  [
  \begingroup
  \mathcode`:=\string"8000
  #1%
  \endgroup 
  ]
}
\begin{document}
\title{Rational lines on smooth cubic surfaces}

\author{Stephen McKean}
\address{Department of Mathematics \\ Brigham Young University} 
\email{mckean@math.byu.edu}
\urladdr{shmckean.github.io}

\subjclass[2020]{14N15, 14G25}

\begin{abstract}
We prove that the enumerative geometry of lines on smooth cubic surfaces is governed by the arithmetic of the base field. In 1949, Segre proved that the number of lines on a smooth cubic surface over any field is 0, 1, 2, 3, 5, 7, 9, 15, or 27. Over a given field, each of these line counts may or may not be realized by some cubic surface. We give a sufficient criterion for each of these line counts in terms of the Galois theory of the base field.
\end{abstract}

\maketitle

\section{Introduction}
In 1849, Cayley and Salmon proved that every smooth cubic surface over $\mb{C}$ contains exactly 27 complex lines~\cite{Cay49}. By 1858, Schl\"afli had proved that every smooth cubic surface over $\mb{R}$ contains exactly 3, 7, 15, or 27 real lines~\cite{Sch58}, with each of these counts occurring for some real cubic surface. Following this theme, B. Segre classified all possible rational line counts for smooth cubic surfaces over $\mb{Q}$ in 1949~\cite{Seg49}.

\begin{thm}[Segre]\label{thm:lines-over-Q}
Every smooth cubic surface over $\mb{Q}$ contains 0, 1, 2, 3, 5, 7, 9, 15, or 27 lines defined over $\mb{Q}$. Moreover, each of these counts is realized by some smooth cubic surface over $\mb{Q}$.
\end{thm}

B. Segre further showed in \textit{loc.~cit.}~that the line counts in Theorem~\ref{thm:lines-over-Q} are the only possible line counts for smooth cubic surfaces over any field:

\begin{thm}[Segre]\label{thm:lines-over-k}
The number of lines on a smooth cubic surface over any field must be $0,1,2,3,5,7,9,15,$ or $27$.
\end{thm}

In light of Theorem~\ref{thm:lines-over-k}, one can try to classify all which line counts actually occur for smooth cubic surfaces over a given field. These line counts will be a subset of $\{0,1,2,3,5,7,9,15,27\}$. For example, all line counts have been classified for smooth cubic surfaces over the following fields.

\begin{itemize}
\item Smooth cubic surfaces over $\mb{C}$ can only have 27 lines~\cite{Cay49}.
\item Smooth cubic surfaces over $\mb{R}$ can only have 3, 7, 15, or 27 lines, and each of these counts occurs~\cite{Sch58}.
\item Smooth cubic surfaces over $\mb{Q}$ can have 0, 1, 2, 3, 5, 7, 9, 15, or 27 lines, and each of these counts occurs~\cite{Seg49}.
\item Smooth cubic surfaces over $\mb{F}_2$ and $\mb{F}_3$ can only have 0, 1, 2, 3, 5, 9, or 15 lines, and each of these counts occurs~\cite{Dic15,LT19}.
\item Smooth cubic surfaces over $\mb{F}_5$ can only have 0, 1, 2, 3, 5, 7, 9, or 15 lines, and each of these counts occurs~\cite{LT19}.
\item Smooth cubic surfaces over $\mb{F}_q$ can have 0, 1, 2, 3, 5, 7, 9, 15, or 27 lines when $q>5$ is odd or $q=2^d$ with $d>1$, and each of these counts occurs~\cite{LT19}.
\end{itemize}

We clarify these results by providing, for each $n\in\{0,1,2,3,5,7,9,15,27\}$, a sufficient criterion for the occurrence of the line count $n$ over any given field $k$ (assuming $|k|\geq 23$). These criteria depend only on the Galois theory of $k$, so our main theorem can be summarized by saying that arithmetic governs the enumerative geometry of lines on cubic surfaces.

\begin{thm}\label{thm:main}
Let $k$ be a field with $|k|\geq 23$. There is a smooth cubic surface over $k$ whose 27 lines are all defined over $k$. Moreover, there is a smooth cubic surface over $k$ containing $n$ lines defined over $k$ if $k$ admits a separable field extension of the degrees listed in Table~\ref{table:main}.
\begin{table}[h]
\caption{Line counts and degrees of extensions}\label{table:main}
\renewcommand{\arraystretch}{1.6}
\aboverulesep=0ex
\belowrulesep=0ex
\centering
\begin{tabular}{c|c}
\toprule
$n$ & degree(s)\\
\toprule
15 & 2\\
9 & 3\\
7 & 2\\
5 & 4\\
\bottomrule
\end{tabular}
\qquad
\begin{tabular}{c|c}
\toprule
\makebox[\widthof{15}]{$n$} & degree(s)\\
\toprule
3 & 2\\
2 & 5\\
1 & 2 and 4\\
0 & 3 or 6\\
\bottomrule
\end{tabular}
\end{table}
\end{thm}

\begin{rem}\label{rem:thm about 3s}
Even more can be said about cubic surfaces with 3 lines. Such triples of lines are either skew or coplanar and pairwise intersecting. There is a smooth cubic surface over $k$ with 3 skew lines if $k$ admits separable extensions of degrees 2 and 3, and there is a smooth cubic surface over $k$ with 3 coplanar lines if $k$ admits a separable extension of degree 2.
\end{rem}

\begin{rem}
Theorem~\ref{thm:main} is only true if $|k|>5$, as the fields $\mb{F}_2$, $\mb{F}_3$, and $\mb{F}_5$ admit separable field extensions of arbitrary degree but do not admit all possible line counts for smooth cubic surfaces. While $\mb{F}_q$ does not contradict Theorem~\ref{thm:main} unless $q=2,3,5$, our methods do not account for this. We prove Theorem~\ref{thm:main} by blowing up Galois-invariant sets of points in the plane, and the cardinality assumption allows us to ensure that these sets of points can be arranged in general position.
\end{rem}

\begin{rem}
In an earlier version of this article, we claimed that the sufficient criteria in Theorem~\ref{thm:main} are also necessary. However, this claim was based on a mistake that was caught by Sam Streeter. We will point out this mistake later in the article. In forthcoming joint work with Kaya, Streeter, and Uppal \cite{KMSU25}, we give necessary criteria for line counts on cubic surfaces (and other del Pezzo surfaces) in terms of the arithmetic of the base field.
\end{rem}

\subsection{Application: classifying line counts}
As an application of Theorem~\ref{thm:main}, we classify all possible line counts for smooth cubic surfaces over finitely generated fields and finite transcendental extensions of arbitrary fields.

\begin{cor}\label{cor:fin-gen}
Let $k$ be a finitely generated field (with $|k|\geq 23$) or a finite transcendental extension of an arbitrary field. There is a smooth cubic surface over $k$ containing $n$ lines defined over $k$ for each $n\in\{0, 1, 2, 3, 5, 7, 9, 15, 27\}$.
\end{cor}

Loughran and Trepalin's classification of line counts over finite fields~\cite{LT19} also follows from Theorem~\ref{thm:main} (provided that the finite field contains at least 23 elements).

In \cite{KMSU25}, we give sufficient and necessary criteria for each line count, which can then be applied to characterize all line counts over a given field. For example, it holds that a smooth cubic surface over a real closed field must have 3, 7, 15, or 27 lines, generalizing Schl\"afli's classical count of lines on real cubic surfaces. One can also show that any smooth cubic surface over the field of complex constructible numbers has 0, 2, 5, 9, or 27 lines, as this field is quadratically closed.

\subsection{Remarks on the inverse Galois problem for cubic surfaces}\label{sec:igp}
B. Segre's proof of Theorem~\ref{thm:lines-over-k} is geometric. A modern approach to this theorem comes from the inverse Galois problem for cubic surfaces. An integer $n$ can be a line count for some smooth cubic surface over some field only if there is a subgroup conjugacy class of the Weyl group $W(\mathrm{E}_6)$ whose action on the Schl\"afli graph has $n$ fixed points. There are 25 conjugacy classes to consider, and each of the counts given in Theorem~\ref{thm:lines-over-k} occurs for at least one of these conjugacy classes. We include Loughran's Magma implementation of this computation in Appendix~\ref{sec:magma}. See also~\cite[Table 7.1]{BFL19} for a list of the conjugacy classes and their corresponding line counts.

Because we can deduce all occuring line counts over a given field $k$ by solving the inverse Galois problem for cubic surfaces over $k$, this inverse Galois problem is stronger than just classifying all line counts over $k$. The inverse Galois problem for cubic surfaces was solved over $\mb{Q}$ by Elsenhans and Jahnel~\cite{EJ15} (which thus gives an alternate proof of Theorem~\ref{thm:lines-over-Q}) and over finite fields by Loughran and Trepalin~\cite{LT19} (see also~\cite{BFL19}).

Loughran and Trepalin show that fewer conjugacy classes occur for cubic surfaces over $\mb{F}_2$ than over $\mb{F}_3$~\cite[Theorem 1.1]{LT19}, even though the sets of line counts over these two fields agree. In particular, the inverse Galois problem is \textit{strictly} stronger than classifying line counts. However, we can actually solve the inverse Galois problem for cubic surfaces over some fields by obstructing certain line counts. The only line counts coming from more than one conjugacy class are 0, 1, and 3, so if the only possible line counts over $k$ are a subset of $\{2,5,7,9,15,27\}$, then one can solve the inverse Galois problem for cubic surfaces over $k$ by characterizing line counts over $k$.

\subsection{Open question: counting lines on a given cubic surface}
In a slightly different direction, one can ask about the number of rational lines on a given cubic surface. Ideally, we would like to be able to determine this number directly from the defining polynomial of the given cubic surface. 

In joint work with Minahan and Zhang~\cite[Theorem 1.1]{MMZ20}, we proved that the number of real lines on a smooth cubic surface $X$ over $\mb{R}$ can be determined from the defining polynomial of $X$ and the defining equations of 3 skew real lines on $X$. It seems reasonable that one could generalize this result to hold over other subcomplex fields. 

The reason behind requiring the data of 3 skew lines is Galois-theoretic: the Galois group of solving for the 27 lines on a cubic surface is not solvable~\cite{Jor57}, so there is no equation in radicals for the defining equations of these 27 lines. In contrast, the Galois group of solving for the 27 lines on a cubic surface with 3 skew lines is solvable~\cite{Har79}, so there is a formula in radicals for the 27 lines in terms of the cubic surface and the given 3 skew lines. Even without the data of 3 skew lines, there is an algebraic function solving for the 27 lines on $X$ in terms of its defining polynomial, so one could hope that the count of rational lines on $X$ can also be read from its defining polynomial.

\begin{ques}
Let $k$ be a field. Given a homogeneous polynomial $F\in k[x_0,\ldots,x_3]$ of degree 3 whose associated cubic surface $\mb{V}(F)\subset\mb{P}^3$ is smooth, can the number of $k$-rational lines on $\mb{V}(F)$ be determined from the coefficients of $F$?
\end{ques}

\subsection{Methods and related work}
Studying rational lines on cubic surfaces via blow ups is a classical technique. See~\cite{LT19,BFL19} and the references therein for some recent applications of this approach. When blowing up collections of closed points to get smooth cubic surfaces, one technical requirement is that the points lie in general position. One can derive algebraic criteria for this by requiring the points to lie on the cuspidal cubic $\mb{V}(y^3-x^2z)\subset\mb{P}^2_k$ (see Section~\ref{sec:cusp}). We learned this trick from a private communication from J.-P. Serre, but the same idea appears in~\cite{PSS20}.

Building on an earlier version of this article, El Manssour--El Maazouz--Kaya--Rose use the method described in Section~\ref{sec:cusp} to show that all line counts occur for smooth cubic surfaces over $p$-adic fields~\cite{MMKR22}.

While we focus on the existence of line counts for cubic surfaces over certain fields, one can go further by investigating the distributions of these line counts or even classifying all cubic surfaces with a given line count. These distributions are known over finite fields due to the work of Das~\cite{Das20}. One can apply the methods of~\cite[Proposition 3.4]{PV02} to understand these distributions over $\mb{Q}$. There has been extensive work on the subject of classifying cubic surfaces and their lines over finite fields, especially on classifying cubic surfaces with 27 lines over a finite field. See e.g.~\cite{Hir67a,Hir67b,BHK18,BK19}.

\subsection{Outline and conventions}
We begin with an overview of some useful classical results in Section~\ref{sec:preliminaries}. We then give B. Segre's original geometric proof of Theorem~\ref{thm:lines-over-k} (with some details added and a minor error corrected) in Section~\ref{sec:proofs}. We prove Theorem~\ref{thm:main} in Section~\ref{sec:sufficient}. Finally, we apply Theorem~\ref{thm:main} in Section~\ref{sec:specific} to prove Corollary~\ref{cor:fin-gen}. In Appendix~\ref{sec:magma}, we give Loughran's code that gives a modern proof of Theorem~\ref{thm:lines-over-k}.

Throughout this article, we will only consider smooth cubic surfaces. When working over a field $k$, we will use the term \textit{rational lines} to refer to lines defined over $k$ (see Definition~\ref{defn:field of definition}). Whenever we write $Y\subseteq X$ or $Y\subset X$ for schemes $X,Y$, we mean that $Y$ is a closed subscheme of $X$.

\subsection*{Acknowledgements}
We thank Kirsten Wickelgren for her advice and support, as well as Alex Betts, Ronno Das, Igor Dolgachev, Enis Kaya, Viatcheslav Kharlamov, Aaron Landesman, Antonio Lerario, Dan Loughran, Jean-Pierre Serre, and Ravi Vakil for their helpful correspondence. We also thank the anonymous referee whose thorough feedback encouraged us to rewrite this article. We are especially indebted to J.W.P. Hirschfeld and J.-P. Serre for bringing Segre's result~\cite{Seg49} to our attention, to J.-P. Serre for several enlightening discussions, and to Sam Streeter for catching a serious error in a previous version of this article. We thank the anonymous referee whose thorough feedback inspired us to strengthen our results and rewrite this article. The author received support from an NSF MSPRF grant (DMS-2202825) and Kirsten Wickelgren's NSF CAREER grant (DMS-1552730).

\section{Preliminaries}\label{sec:preliminaries}
We state a few classical results that we will use throughout this article.

\begin{defn}\label{defn:field of definition}
Let $K/k$ be a field extension. We say that a closed subscheme $X\subseteq\mb{P}^n_{K}$ is \textit{defined over} $k$ or \textit{has field of definition} $k$ if the following equivalent conditions are satisfied (see e.g.~\cite[Proposition 1.2]{Dol16}).
\begin{enumerate}[(a)]
\item The defining ideal of $X$ is generated by homogeneous polynomials in $k[x_0,\ldots,x_n]$.
\item There exists a closed subscheme $Y\subseteq\mb{P}^n_k$ such that $X=Y\times_{\Spec{k}}\Spec{K}$.
\end{enumerate}
\end{defn}

Any closed subscheme of projective space has a minimal field of definition by~\cite[IV\textsubscript{2}, Corollaire (4.8.11)]{EGA}. If a scheme $X$ has field of definition $k$, we may also say that $X$ is \textit{$k$-rational}. Definition~\ref{defn:field of definition} (b) immediately implies that field of definition is preserved under base change.

\begin{prop}\label{prop:field under base change}
Let $k\subseteq K\subseteq K'$ be a tower of fields. Let $X\subseteq\mb{P}^n_K$ be a closed subscheme. If $X$ is defined over $k$, then the base change $X_{K'}=X\times_{\Spec{K}}\Spec{K'}$ is defined over $k$.
\end{prop}
\begin{proof}
By assumption, there exists a closed subscheme $Y\subseteq\mb{P}^n_k$ such that $X=Y\times_{\Spec{k}}\Spec{K}$. Thus
\begin{align*}
X_{K'}&=(Y\times_{\Spec{k}}\Spec{K})\times_{\Spec{K}}\Spec{K'}\\
&=Y\times_{\Spec{k}}\Spec{K'},
\end{align*}
as desired.
\end{proof}

Since closed immersions are stable under base change~\cite[\href{https://stacks.math.columbia.edu/tag/01JY}{Lemma 01JY}]{Sta18}, Proposition~\ref{prop:field under base change} states that $k$-rational subschemes get sent to $k$-rational subschemes under base change. The converse is also true.

\begin{prop}\label{prop:descend field of definition}
Let $k\subseteq K\subseteq K'$ be a tower of fields. Let $X,Y\subseteq\mb{P}^n_K$. Suppose that $Y_{K'}\subseteq X_{K'}$ and that $X_{K'},Y_{K'}$ are both defined over $k$. Then $Y\subseteq X$, and $X,Y$ are both defined over $k$.
\end{prop}
\begin{proof}[Proof of Proposition~\ref{prop:descend field of definition}]
Field extensions are fpqc and closed immersions satisfy fpqc descent~\cite[IV\textsubscript{2}, Proposition (2.7.1) (xii)]{EGA}, so the assumption that $Y_{K'}\subseteq X_{K'}$ implies that $Y\subseteq X$. 

We now show that $X$ is defined over $k$. The proof that $Y$ is defined over $k$ follows the same argument. Let $\mc{I}$ and $\mc{J}$ be the defining ideals of $X$ and $X_{K'}$, respectively, so that $\mc{I}=\mc{J}\cap K[x_0,\ldots,x_n]$. Under Definition~\ref{defn:field of definition} (a), the assumption that $X_{K'}$ is defined over $k$ means that there are homogeneous polynomials $f_1,\ldots,f_m\in k[x_0,\ldots,x_n]$ such that $\mc{J}=(f_1,\ldots,f_m)\cdot K'[x_0,\ldots,x_m]$. Since $k[x_0,\ldots,x_n]\subseteq K[x_0,\ldots,x_n]$, it follows that $\mc{J}\cap K[x_0,\ldots,x_n]$ is again generated by $f_1,\ldots,f_m$. In particular, $\mc{I}$ is generated by homogeneous polynomials in $k[x_0,\ldots,x_n]$, so $X$ is defined over $k$.
\end{proof}

For any field extension $K/k$, a cubic surface over $k$ is smooth if and only if its base change to $K$ is smooth (see e.g. \cite[IV\textsubscript{4}, Proposition (17.3.3) (iii) and Corollaire (17.7.3) (ii)]{EGA}). Together with Propositions~\ref{prop:field under base change} and~\ref{prop:descend field of definition}, this means that we can enumerate $k$-rational lines on $X$ by base changing to a field $K$ over which all 27 lines on $X$ are defined and studying the $k$-rationality of lines on $X_K$. Since smooth cubic surfaces are separably split~\cite{Coo88}, all lines on a smooth cubic surface over a field $k$ are defined over the separable closure $k^s$ (within any chosen algebraic closure of $k$).

We have thus reduced the study of rational lines on $X$ to the study of $k$-rational lines on $X_{k^s}$. We will study the field of definition of lines on cubic surfaces by acting on the relevant varieties by the absolute Galois group. This was done classically for lines on cubic surfaces over $\mb{R}$, as well as by Pannekoek~\cite{Pan09} for studying Galois orbits of lines on cubic surfaces over number fields. 

\begin{prop}\label{prop:Galois-inv}
Let $k$ be a field, and fix a separable closure $k^s$ of $k$. A geometrically reduced closed subscheme $X\subseteq\mb{P}^n_{k^s}$ is defined over $k$ if and only if $\sigma\cdot X=X$ for all $\sigma\in\Gal(k^s/k)$.
\end{prop}
\begin{proof}
The group $\Gal(k^s/k)$ acts on the defining ideal $\mc{I}\subseteq k^s[x_0,\ldots,x_n]$ of $X$ by acting on the coefficients of each $f\in\mc{I}$. If $X$ is defined over $k$, then the coefficients of any generating set of $\mc{I}$ are fixed under $\Gal(k^s/k)$-action and hence so is $X$. 

Now suppose $X$ is fixed under $\Gal(k^s/k)$-action. By Hilbert's Basis Theorem, $X$ is defined by a finite set $\{f_1,\ldots,f_r\}$ of polynomials over some finite extension $k'\subseteq k^s$ of $k$. Given $f\in\mc{I}$ and $\sigma\in\Gal(k'/k)$, denote the image of $f$ under $\sigma$-action by $f^\sigma$. Since $\sigma\cdot X=X$, we have that $f^\sigma(p)=0$ for all $p\in X$. In particular, $f^\sigma\in\mc{I}$ for all $f\in\mc{I}$. The desired result follows from~\cite[Lemma 1 (b)]{HRC12}. We describe the relevant ideas here. Fix a $k$-basis $\{e_1,\ldots,e_m\}$ of $k'$, and let $\Tr_{k'/k}:k'[x_0,\ldots,x_n]\to k[x_0,\ldots,x_n]$ be given by taking the Galois trace of each coefficient of a given polynomial. Then $\{\Tr_{k'/k}(e_if_j)\}_{i,j}$ generates the ideal $\mc{I}$. Moreover, since $\Tr_{k'/k}(e_if_j)^\sigma=\Tr_{k'/k}(e_if_j)$ for all $\sigma\in\Gal(k'/k)$, it follows that $\Tr_{k'/k}(e_if_j)\in k[x_0,\ldots,x_n]$. Thus $\mc{I}$ is generated by polynomials over $k$, as desired.
\end{proof}

A cubic surface $X$ defined over $k$ is fixed by $\Gal(k^s/k)$-action, so Galois action preserves the set of 27 lines on $X_{k^s}$. Moreover, Galois action preserves the incidence relations of the 27 lines:

\begin{prop}\label{prop:defined}
Let $k$ be a field with $k^s$ a fixed separable closure, $X$ be a smooth cubic surface defined over $k$, and $\sigma\in\Gal(k^s/k)$. Two lines $L$ and $L'$ in $X_{k^s}$ intersect if and only if $\sigma\cdot L$ and $\sigma\cdot L'$ intersect.
\end{prop}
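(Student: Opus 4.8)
The plan is to reduce the statement to the elementary fact that the $\Gal(\ovl{k}/k)$-action on $\mb{P}^3_{\ovl{k}}$ is by automorphisms, hence a bijection on $\ovl{k}$-points that preserves all incidence data. Concretely, $\sigma$ sends an $\ovl{k}$-point $p=[a_0:a_1:a_2:a_3]$ to $\sigma\cdot p=[\sigma(a_0):\sigma(a_1):\sigma(a_2):\sigma(a_3)]$, and for a closed subscheme $X$ cut out by polynomials $f$, the image $\sigma\cdot X$ is cut out by the polynomials $f^\sigma$ obtained by applying $\sigma$ to the coefficients, exactly as in the proof of Proposition~\ref{prop:Galois-inv}. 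The first thing I would record is that $\sigma\cdot L$ is again a line on $S_{\ovl{k}}$: applying $\sigma$ to the defining linear forms of $L$ yields linear forms, so $\sigma\cdot L$ is a line, and since $S$ is defined over $k$ we have $\sigma\cdot S=S$ by Proposition~\ref{prop:Galois-inv}, so this line still lies on $S_{\ovl{k}}$.

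Next I would observe that $p\in X$ if and only if $\sigma\cdot p\in\sigma\cdot X$. Indeed, $f^\sigma(\sigma\cdot p)=\sigma(f(p))$, and $\sigma$ is injective on $\ovl{k}$ with $\sigma(0)=0$, so $f(p)=0$ precisely when $f^\sigma(\sigma\cdot p)=0$. Two distinct lines in $\mb{P}^3$ intersect exactly when they share a common $\ovl{k}$-point, so the statement then follows quickly: if $L$ and $L'$ meet at $p$, then $\sigma\cdot p$ lies on both $\sigma\cdot L$ and $\sigma\cdot L'$, so these intersect; conversely, because $\sigma^{-1}\in\Gal(\ovl{k}/k)$ acts in the same manner, if $\sigma\cdot L$ and $\sigma\cdot L'$ meet then applying $\sigma^{-1}$ shows that $L$ and $L'$ meet.

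I expect there to be no genuinely hard step here; the content is purely formal, resting on the fact that Galois action is invertible and commutes with evaluation of polynomials. The one point meriting minor care is the justification that ``intersect'' may be tested on $\ovl{k}$-points rather than on scheme-theoretic intersections. Since each $L$ is a reduced line in $\mb{P}^3_{\ovl{k}}$ and two distinct lines meet in at most one reduced point, there is no discrepancy between the point-set and scheme-theoretic notions, so this causes no difficulty. If one prefers to avoid coordinates altogether, the same argument can be phrased functorially: $\sigma$ induces an automorphism of $\mb{P}^3_{\ovl{k}}$ over $\Spec{k}$ under which $L\cap L'$ maps isomorphically onto $(\sigma\cdot L)\cap(\sigma\cdot L')$, so one intersection is empty if and only if the other is.
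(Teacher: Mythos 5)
Your proposal is correct and follows essentially the same route as the paper's proof: the Galois action is pointwise, so a common point $p$ of $L$ and $L'$ maps to the common point $\sigma\cdot p$ of $\sigma\cdot L$ and $\sigma\cdot L'$, and the converse follows by applying $\sigma^{-1}$. The extra details you supply (that $f^\sigma(\sigma\cdot p)=\sigma(f(p))$ and that intersection may be tested on $\ovl{k}$-points) are correct elaborations of what the paper leaves implicit.
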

\begin{proof}
The $\sigma$-action is defined pointwise. In particular, if $L$ and $L'$ intersect in the point $p$, then $\sigma\cdot L$ and $\sigma\cdot L'$ intersect in the point $\sigma\cdot p$. Conversely, if $\sigma\cdot L$ and $\sigma\cdot L'$ intersect in the point $q$, then $L$ and $L'$ intersect in the point $\sigma^{-1}\cdot q$.
\end{proof}

\begin{prop}\label{prop:residual-is-rationally-determined}
Let $k$ be a field with fixed separable closure $k^s$, and let $X$ be a smooth cubic surface defined over $k$. If $L_1,L_2,L_3\subseteq X_{k^s}$ are three coplanar lines, and if $L_1$ and $L_2$ are defined over $k$, then $L_3$ is also defined over $k$.
\end{prop}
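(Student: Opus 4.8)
The plan is to exploit the fact that three distinct coplanar lines on $S_{\ovl{k}}$ cut out a complete plane section of the cubic surface, and to combine this with the Galois descent criterion of Proposition~\ref{prop:Galois-inv}. We may assume $L_1,L_2,L_3$ are pairwise distinct, since otherwise $L_3$ coincides with $L_1$ or $L_2$ and is defined over $k$ by hypothesis.

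First I would produce the plane $H\subseteq\mb{P}^3_{\ovl{k}}$ containing the three lines and check that it is itself defined over $k$. Since $L_1$ and $L_2$ are distinct and coplanar, they meet in a point and span a unique plane $H$. For any $\sigma\in\Gal(\ovl{k}/k)$, the hypothesis that $L_1$ and $L_2$ are defined over $k$ gives $\sigma\cdot L_1=L_1$ and $\sigma\cdot L_2=L_2$ by Proposition~\ref{prop:Galois-inv}. Because $\sigma$ acts as a linear automorphism of $\mb{P}^3_{\ovl{k}}$, it sends the plane spanned by $L_1,L_2$ to the plane spanned by $\sigma\cdot L_1,\sigma\cdot L_2$, so $\sigma\cdot H=H$. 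As $H$ is a geometrically reduced hyperplane fixed by all of $\Gal(\ovl{k}/k)$, Proposition~\ref{prop:Galois-inv} shows that $H$ is defined over $k$.

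Next I would identify the plane section $H\cap S_{\ovl{k}}$ with $L_1\cup L_2\cup L_3$. This intersection is a plane cubic curve containing the three distinct lines $L_1,L_2,L_3$, so by degree considerations it equals $L_1\cup L_2\cup L_3$ and contains no further line. Since both $S_k$ and $H$ are defined over $k$, the section $H\cap S_{\ovl{k}}$ is fixed by every $\sigma\in\Gal(\ovl{k}/k)$. By Proposition~\ref{prop:defined}, $\sigma$ permutes the lines of $S_{\ovl{k}}$, so it permutes the set $\{L_1,L_2,L_3\}$; as $\sigma$ fixes $L_1$ and $L_2$, it must fix $L_3$ as well. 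Thus $\sigma\cdot L_3=L_3$ for every $\sigma$, and Proposition~\ref{prop:Galois-inv} shows that $L_3$ is defined over $k$.

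The only real subtlety, and the step I would be most careful about, is verifying that the spanning plane $H$ descends to $k$ and that the plane section contains no fourth line; both reduce to elementary degree considerations once one knows $H\cap S_{\ovl{k}}$ is a cubic curve. The perfectness of $k$ is exactly what licenses the repeated use of the Galois descent criterion throughout.
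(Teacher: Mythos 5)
Your proof is correct and follows essentially the same route as the paper's: descend the plane $H$ to $k$, use Bézout to identify $H\cap S_{\ovl{k}}$ with $L_1\cup L_2\cup L_3$, and then apply Galois invariance of the configuration to conclude $\sigma\cdot L_3=L_3$. The only difference is that you spell out the Galois-descent argument for $H$ (which the paper simply asserts) and handle the degenerate case of coincident lines explicitly.
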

\begin{proof}
Since $L_1$ and $L_2$ are defined over $k$, the plane $H\subset\mb{P}^3_k$ that contains them is also defined over $k$. By B\'ezout's theorem (and the fact that all lines on $X$ are defined over $k^s$~\cite{Coo88}), we have $H_{k^s}\cap X_{k^s}=L_1\cup L_2\cup L_3$. The varieties $L_1,L_2,H$, and $X$ are each fixed by all $\Gal(k^s/k)$-actions since they are defined over $k$. We now act on the configuration $H_{k^s}\cap X_{k^s}$ by each $\sigma\in\Gal(k^s/k)$. Since $H$ and $X$ are defined over $k$, we have $\sigma\cdot(H_{k^s}\cap X_{k^s})=H_{k^s}\cap X_{k^s}$. That is, $L_1\cup L_2\cup L_3=(\sigma\cdot L_1)\cup(\sigma\cdot L_2)\cup(\sigma\cdot L_3)$. Since $L_1$ and $L_2$ are defined over $k$, we have $\sigma\cdot L_1=L_1$ and $\sigma\cdot L_2=L_2$, so $L_1\cup L_2\cup L_3=L_1\cup L_2\cup(\sigma\cdot L_3)$. It follows that $L_3=\sigma\cdot L_3$ for all $\sigma\in\Gal(k^s/k)$, so $L_3$ is defined over $k$.
\end{proof}

\begin{cor}\label{cor:two-lines-give-third}
If a smooth cubic surface $X$ over a field $k$ contains two rational lines $L_1,L_2$ that intersect each other, then $X$ contains a third rational line $L_3$ that intersects $L_1$ and $L_2$.
\end{cor}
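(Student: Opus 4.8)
The plan is to produce $L_3$ as the residual line of the plane section of $S$ determined by $L_1$ and $L_2$, and then to descend it to $k$ using Proposition~\ref{prop:residual-is-rationally-determined}. Since $L_1$ and $L_2$ are distinct lines meeting in a point, they span a unique plane $H\subset\mb{P}^3_k$, and because $L_1$ and $L_2$ are defined over $k$ this plane is defined over $k$ as well. Base changing to $\ovl{k}$ and applying B\'ezout's theorem (noting that the irreducible cubic $S_{\ovl{k}}$ contains no plane, so $H_{\ovl{k}}\not\subseteq S_{\ovl{k}}$), the intersection $H_{\ovl{k}}\cap S_{\ovl{k}}$ is a plane cubic curve containing $L_1\cup L_2$. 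As $L_1$ and $L_2$ account for degree two of this cubic, the residual component $L_3$ is a line lying in $S_{\ovl{k}}$ and coplanar with $L_1$ and $L_2$.

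Granting for the moment that $L_3$ is distinct from $L_1$ and $L_2$, the conclusion is immediate: any two distinct lines in the plane $H\cong\mb{P}^2$ meet, so $L_3$ intersects both $L_1$ and $L_2$. Moreover, $L_1,L_2,L_3$ are three coplanar lines with $L_1$ and $L_2$ defined over $k$, so Proposition~\ref{prop:residual-is-rationally-determined} shows that $L_3$ is itself defined over $k$. Thus $L_3$ is the desired third line on $S$.

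The step that requires genuine care -- and which I expect to be the main obstacle -- is ruling out a non-reduced plane section, i.e.\ showing that $H_{\ovl{k}}\cap S_{\ovl{k}}$ is not of the form $2L_1+L_2$, which would force $L_3=L_1$. A double line in the section would mean that $H$ is tangent to $S$ along all of $L_1$. To see this cannot happen for a smooth $S$, I would choose homogeneous coordinates so that $L_1=V(x_0,x_1)$ and $H=V(x_0)$, and write the defining cubic as $F=x_0A+x_1B$ for quadrics $A,B$, using $L_1\subset S$. A direct computation gives $\nabla F|_{L_1}=(A|_{L_1},\,B|_{L_1},\,0,\,0)$, and the tangent plane at a point $q\in L_1$ is $V(A(q)x_0+B(q)x_1)$; requiring this plane to equal $H=V(x_0)$ for every $q\in L_1$ forces $B|_{L_1}\equiv 0$. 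But then $\nabla F$ vanishes wherever the binary quadratic form $A|_{L_1}$ does, and such a form always has a zero over $\ovl{k}$; this produces a singular point of $S_{\ovl{k}}$, contradicting smoothness (via Proposition~\ref{prop:smooth base change}). The symmetric argument rules out $L_3=L_2$, so $L_3$ is genuinely a third line.
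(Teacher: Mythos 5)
Your proof follows the same route as the paper's: take the plane $H$ spanned by $L_1$ and $L_2$, apply B\'ezout's theorem to obtain a residual line $L_3$ in $H_{\ovl{k}}\cap S_{\ovl{k}}$, and descend $L_3$ to $k$ via Proposition~\ref{prop:residual-is-rationally-determined}. Your one addition is the verification that the plane section cannot be non-reduced (so that $L_3$ is genuinely distinct from $L_1$ and $L_2$) -- a point the paper's proof takes for granted -- and that tangent-plane computation is correct.
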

\begin{proof}
Let $H$ be the plane containing $L_1$ and $L_2$. By B\'ezout's theorem and~\cite{Coo88}, $X_{k^s}\cap H_{k^s}$ consists of (the base changes of) $L_1,L_2$, and a third line $L_3$. Since $L_1$ and $L_2$ are defined over $k$, Proposition~\ref{prop:residual-is-rationally-determined} implies that $L_3$ is also defined over $k$. Thus each of these three lines on $X_{k^s}$ are the base change of a $k$-rational line on $X$, and their intersection data are preserved by Proposition~\ref{prop:defined}.
\end{proof}

It is a classical result that every smooth cubic surface is the blow-up of $\mb{P}^2$ at 6 general points --- provided that one works over an algebraically closed field. In general, a smooth cubic surface need not be birational to $\mb{P}^2$. For example, Schl\"afli proved that there are smooth cubic surfaces over $\mb{R}$ whose $\mb{R}$-points are homeomorphic to $\mb{RP}^2\sqcup S^2$, where $S^2$ is a 2-sphere (for a modern treatment, see e.g.~\cite[Section 5]{Kol97}). So while a smooth cubic surface $X$ over an arbitrary field $k$ need not be rational, $X$ is \textit{geometrically} rational: we can view $X_{\ovl{k}}$ as the blow-up of $\mb{P}^2_{\ovl{k}}$ at 6 points. In fact, a result of Coombes \cite{Coo88} implies that every cubic surface is \textit{separably} rational.

\begin{lem}\label{lem:separably rational}
Let $k$ be a field, and let $k^s$ be the separable closure of $k$ in some algebraic closure $\ovl{k}$. If $X$ is a smooth cubic surface over $k$, then $X_{k^s}$ is the blow-up of $\mb{P}^2_{k^s}$ at 6 points.
\end{lem}
\begin{proof}
The proof follows a classical argument. We assume that $k=k^s$ (to simplify notation), so that all 27 lines on $X$ are $k$-rational~\cite{Coo88}. Since $X$ contains at least four rational lines, B\'ezout's theorem implies that $X$ contains two skew rational lines (otherwise all four lines would be coplanar, contradicting the fact that $X$ is cubic). We can also show that each line on $X$ meets 5 pairs of intersecting lines on $X$, with each pair of lines disjoint from the others~\cite[Chapter IV.2.5, p.~256]{Sha13}. 

Let $L\subset X$ be a line, and let $\{L_i,L_i'\}_{i=1}^5$ be the set of pairs of lines meeting $L$ (with $L_i\cap L_i'\neq\varnothing$ and $(L_i\cup L_i')\cap(L_j\cup L_j')=\varnothing$ for $i\neq j$). If $\Lambda\subset X$ is a line that does not meet $L$, then $\Lambda$ meets at most one of $L_i,L_i'$ for each $i$ (otherwise $L$ and $\Lambda$ would be coplanar and hence not disjoint). In fact, $\Lambda$ meets precisely one of $L_i,L_i'$ for each $i$. To see this, let $H_i$ be the plane such that $X\cap H_i=L\cup L_i\cup L_i'$. Since $\Lambda\subset X$, the intersection $H_i\cap\Lambda$ consists of a single point that must lie on $X$. Thus $H_i\cap\Lambda\subset X\cap H_i=L\cup L_i\cup L_i'$. Since $L\cap\Lambda=\varnothing$ by assumption, we are done and can conclude that there are exactly 5 lines in $X$ meeting any skew pair of lines.

Given two skew lines $L_1,L_2\subset X$, we construct mutually inverse rational maps $\phi:X\dashrightarrow L_1\times L_2$ and $\psi:L_1\times L_2\dashrightarrow X$ as follows. For each $x\in X\backslash(L_1\cup L_2)$, let $L_x\subset\mb{P}^3$ be the unique line through $x$ and meeting $L_1$ and $L_2$. Define $\phi(x)=(L_1\cap L_x,L_2\cap L_x)$. For each $(\ell_1,\ell_2)\in L_1\times L_2$, let $\ovl{\ell_1\ell_2}$ be the line through $\ell_1$ and $\ell_2$. If $\ovl{\ell_1\ell_2}$ is not contained in $X$, then B\'ezout's theorem implies that $X\cap\ovl{\ell_1\ell_2}$ consists of three distinct points: $L_1\cap\ovl{\ell_1\ell_2}$, $L_2\cap\ovl{\ell_1\ell_2}$, and a third point, which we denote $\psi(\ell_1,\ell_2)$. Thus $X$ is birational to $L_1\times L_2\cong\mb{P}^1_k\times\mb{P}^1_k$.

We next extend $\phi:X\dashrightarrow\mb{P}^1_k\times\mb{P}^1_k$ to a morphism. If $x\in X\backslash L_i$, let $H_i$ be the unique plane in $\mb{P}^3_k$ containing $L_i\cup x$ for $i=1,2$. If $x\in L_i$, let $H_i=T_xX$. Setting $\phi(x)=(H_2\cap L_1,H_1\cap L_2)$, one can check that $\phi:X\to\mb{P}^1_k\times\mb{P}^1_k$ is now a well-defined morphism. The inverse of $\phi$ is not well-defined precisely where $\psi$ is not well-defined, namely whenever $\ovl{\ell_1\ell_2}\subset X$. These are lines in $X$ that meet the two skew lines $L_1$ and $L_2$, and there are 5 such lines. One then checks that $\phi:X\to\mb{P}^1_k\times\mb{P}^1_k$ is a blow-up at these 5 points. Since $\mb{P}^1_k\times\mb{P}^1_k$ is the blow-up of $\mb{P}^2_k$ at 1 point, it follows that $X$ is the blow-up of $\mb{P}^2_k$ at 6 points.
\end{proof}

\begin{rem}
As mentioned in the introduction, we claimed in a previous version of this article that the sufficient criteria listed in Theorem~\ref{thm:main} are also necessary. This was based on an incorrect application of Lemma~\ref{lem:separably rational} --- we claimed that if $X_{k^s}$ is $k$-rational, then the blowdown locus of 6 points in $\mb{P}^2_{k^s}$ must also be $k$-rational. However, as Sam Streeter pointed out to us, this is not true. Given a closed embedding $Z\subseteq Y$, the blowup $\mr{Bl}_Z(Y)$ can be $k$-rational even if $Z$ is not $k$-rational. For example, there are real cubic surfaces that are not birational to $\mb{RP}^2$ (Schl\"afli's fifth type), whereas these cubic surfaces are geometrically rational (as are all smooth cubic surfaces).
\end{rem}

\section{The list of possible line counts}\label{sec:proofs}
We now give B. Segre's proof of Theorem~\ref{thm:lines-over-k}. The general idea is to pass to the separable closure, work geometrically, and keep track of the field of definition of each line. As mentioned in Section~\ref{sec:igp}, Theorem~\ref{thm:lines-over-k} can be proved by a computation on the Weyl group $W(\mathrm{E}_6)$ (see Appendix~\ref{sec:magma} for a Magma implementation of this computation, provided to us by Loughran). However, we find Segre's geometric proof interesting and worth expositing. We will add various details omitted from Segre's original account, streamline some of the arguments, and correct Segre's unnecessary $\Char{k}\neq 2$ assumption.

The proof utilizes a few geometric facts, which we list for the reader's convenience. These facts are classical, although we keep track of the field of definition of the lines involved when necessary. We will omit any proofs that do not require us to keep track of fields of definition. The first fact is that every line $L$ on a smooth cubic surface meets exactly one line in each triple of coplanar lines (to which $L$ does not belong).

\begin{lem}\label{lem:each line meets triangle}
Let $X$ be a smooth cubic surface. Given three pairwise-intersecting lines $L_1,L_2,L_3$ on $X$, any other line on $X$ meets exactly one of $L_1,L_2,L_3$.
\end{lem}

In Lemma~\ref{lem:four skew gives 15}, we will show that if a smooth cubic surface $X$ contains four skew $k$-rational lines, then $X$ contains either 15 or 27 $k$-rational lines. Two key ingredients are: each triple of skew lines on $X$ meets a unique triple of skew lines on $X$, and each quadruple of skew lines on $X$ meets a unique pair of skew lines on $X$.

\begin{prop}\label{prop:three skew meet three skew}
Let $X$ be a smooth cubic surface. Given three skew lines $L_1,L_2,L_3\subset X$, there is a unique triple $M_1,M_2,M_3\subset X$ of skew lines that each meet $L_i$.
\end{prop}

\begin{prop}\label{prop:four skew gives two}
Let $X$ be a smooth cubic surface. Given four skew lines $L_1,\ldots,L_4\subset X$, there is a unique pair $L,L'\subset X$ of skew lines meeting each $L_i$. 
\end{prop}

We will also need the facts that each pair of skew lines on $X$ belongs to a unique double six that splits the pair, and that the intersection graph of the 15 lines in the complement of any double six is given by Figure~\ref{fig:fifteen}.

\begin{defn}
A \textit{double six} is a collection $\{L_i,L_i'\}_{i=1}^6$ of twelve lines such that $L_1,\ldots,L_6$ are skew, $L_1',\ldots,L_6'$ are skew, $L_i$ and $L_i'$ are skew, and $L_i$ and $L_j'$ are not skew for $i\neq j$. The two subsets $\{L_i\}$ and $\{L_i'\}$ are called \textit{sextuples}.
\end{defn}

\begin{prop}\label{prop:double six}
Let $X$ be a smooth cubic surface. Given two skew lines $L,L'\subset X$, there is a unique double six of lines on $X$ with $L$ and $L'$ belonging to different sextuples.
\end{prop}

\begin{lem}\label{lem:intersection graph of 15}
Let $X$ be a smooth cubic surface. The intersection graph of the lines in the complement of any double six on $X$ is the graph given in Figure~\ref{fig:fifteen}.
\end{lem}
\begin{proof}
 The incidence pattern of the lines in the complement of a double six, together with the tritangent planes to which they belong, form the Cremona--Richmond configuration $\mathrm{CR}$~\cite{Sch58}. Since the Cremona--Richmond configuration is self-dual, we can take the vertices of $\mathrm{CR}$ to represent the 15 lines on $X$ and the lines of $\mathrm{CR}$ to represent the tritangent planes to which these lines on $X$ belong. Each such tritangent plane corresponds to a 3-cycle in $G$ since coplanar lines on $X$ are pairwise-intersecting. It follows that we can obtain $G$ by ``projectivizing'' $\mathrm{CR}$: we turn each line on $\mathrm{CR}$ into a 3-cycle by joining the vertices on each end with a new edge. This is the graph given in Figure~\ref{fig:fifteen}.
\end{proof}

We can now show that if a smooth cubic surface $X$ contains four skew $k$-rational lines, then $X$ contains either 15 or 27 $k$-rational lines.

\begin{lem}\label{lem:four skew gives 15}
Let $X$ be a smooth cubic surface over a field $k$ with four skew $k$-rational lines $L_1,\ldots,L_4\subset X$. Let $L,L'\subset X$ be the (not necessarily $k$-rational) lines meeting each $L_i$. Let $D$ be the double six of $X$ such that $L$ and $L'$ belong to different sextuples. The 15 lines of $X$ not belonging to $D$ are all defined over $k$, and the lines belonging to $D$ are either all defined over $k$ or all not defined over $k$.
\end{lem}
\begin{proof}
We will first show that $L,L'$ are either both $k$-rational or both not $k$-rational. Three skew lines in $\mb{P}^3_k$ determine a unique quadric surface. Let $Q$ be the quadric determined by $L_1,L_2,L_3$. Let $M_1,M_2,M_3$ be the triple of skew lines meeting $L_1,L_2,L_3$ as given by Proposition~\ref{prop:three skew meet three skew}. B\'ezout's theorem and the fact that each $M_i$ meets each $L_1,L_2,L_3$ implies that $M_1,M_2,M_3$ are also contained in $Q$. Since $L$ and $L'$ meet $L_1,L_2,L_3$, we deduce that $L,L'\in\{M_1,M_2,M_3\}$. Because the set $\{L_1,L_2,L_3\}$ is Galois-fixed, the quadric $Q$ and both of its rulings are all defined over $k$. To solve for $L,L'$, we first compute the intersection $Q\cap L_4=\{p_1,p_2\}$. We then take the ruling $R$ of $Q$ that does not contain $L_1,L_2,L_3$ and find the lines $R_1,R_2\in R$ that pass through $p_1,p_2$, respectively. Algebraically, this corresponds to solving a quadratic equation over $k$. Since the roots of a quadratic equation over $k$ have the same field of definition, $L$ and $L'$ are either both $k$-rational or both not $k$-rational.

Fix a separable closure $k^s$ of $k$, and let $G_k=\Gal(k^s/k)$. Let $\Lambda$ be such that $\{L,L',\Lambda\}=\{M_1,M_2,M_3\}$. While $L,L'$ need not be $k$-rational, the line $\Lambda$ is $k$-rational. Indeed, $L_4$ and $Q$ are defined over $k$, so the intersection $L_4\cap Q$ is fixed under $G_k$-action. Both rulings of $Q$ are also defined over $k$, so the set of lines through $L_4\cap Q$ in either of these rulings is fixed under $G_k$. Thus $\{L,L'\}$ is $G_k$-fixed. Since $X$ is also defined over $k$, the intersection $X\cap Q=\{L_1,L_2,L_3,L,L',\Lambda\}$ is $G_k$-fixed. Since $L_1,L_2,L_3$ are all $k$-rational and $\{L,L'\}$ is $G_k$-fixed, it follows that $\Lambda$ is also $G_k$-fixed and is hence $k$-rational by Proposition~\ref{prop:Galois-inv}.

Since $L$ and $L'$ belong to different sextuples in $D$, any line in $D$ must be skew to exactly one of $L$ and $L'$. In particular, the lines $L_1,\ldots,L_4$ do not belong to $D$. Since $\Lambda$ is in a different ruling of $Q$ than $L_1,L_2,L_3$, the rational lines $\Lambda$ and $L_i$ intersect and hence determine a new $k$-rational line $N_i\subset X$ for $1\leq i\leq 3$. Note that each $N_i$ cannot meet $L$ or $L'$, or else we would have two distinct triples of coplanar lines that both contain $L_i$ and $N_i$. In particular, $N_i\not\in D$ for each $i$. 

The line $L_4$ does not meet $L_1,L_2,L_3$, or $\Lambda$, so Lemma~\ref{lem:each line meets triangle} implies that $L_4$ meets each $N_i$ for $1\leq i\leq 3$. We thus obtain new $k$-rational lines $P_1,P_2,P_3$, with $P_i$ meeting $L_4$ and $N_i$. As with each $N_i$, the fact that $L_4$ meets $L$ and $L'$ implies that $P_i$ cannot meet $L$ or $L'$, so $P_i\not\in D$. We have thus found 11 $k$-rational lines in the complement of $D$.

We now fill out the rest of the intersection graph of the complement of $D$. Each $P_i$ must be adjacent to the 3-cycle $\{\Lambda,L_j,N_j\}$ for $i\neq j$. In order to avoid creating two distinct 3-cycles that share an edge, $P_i$ cannot intersect $\Lambda$ or $N_j$. We thus get a $k$-rational line $A_{i,j}$ meeting $P_i$ and $L_j$. Since $A_{i,j}$ meets $L_j$ and $L_j$ meets $L,L'$, it follows as before that $A_{i,j}\not\in D$. Moreover, working within the graph in Figure~\ref{fig:fifteen} shows that $A_{i,j}=A_{j,i}$, so we have found 14 $k$-rational lines in the complement of $D$. The final line in the complement of $D$ is residual to $N_\ell$ and $A_{i,j}$ (where $\{i,j,\ell\}=\{1,2,3\}$), so the final line in $D$ is $k$-rational as well.

Let $S$ and $S'$ be the sextuples in $D$ to which $L$ and $L'$ respectively belong. Since $L$ and $L'$ are disjoint, $L$ intersects each line in $S'-\{L'\}$. Let $\Lambda\in S'-\{L'\}$ be such a line. There is a third line $R\subset X$ that intersects both $L$ and $\Lambda$. Since $R$ intersects $L$, we have $R\not\in S$. Since $R$ intersects $\Lambda\in S'$, we have $R\not\in S'$. Thus $R$ is not contained in the double six $D$, so $R$ is $k$-rational by the previous paragraph. If $L$ is $k$-rational, then Corollary~\ref{cor:two-lines-give-third} implies that $\Lambda$ is also $k$-rational. Similarly, if $L$ is not $k$-rational, then we deduce that $\Lambda$ cannot be $k$-rational by the contraposition of Corollary~\ref{cor:two-lines-give-third}. Repeating this argument for all lines in $S'-\{L'\}$, as well as the symmetric argument for all lines in $S-\{L\}$, we find that $X$ contains exactly 15 $k$-rational if $L,L'$ are not $k$-rational or 27 $k$-rational lines if $L,L'$ are $k$-rational.
\end{proof}

The final fact we will use is that if a smooth cubic surface $X$ contains two triples of coplanar $k$-rational lines, then $X$ contains a \textit{Steiner system} of $k$-rational lines.

\begin{defn}
A set $\{L_i^j\}_{i,j=1}^3$ of nine lines on a smooth cubic surface is called a \textit{Steiner system} if $L_i^1,L_i^2,L_i^3$ are coplanar for all $i$ and $L_1^j,L_2^j,L_3^j$ are coplanar for all $j$.
\end{defn}

\begin{lem}\label{lem:steiner}
Let $X$ be a smooth cubic surface over a field $k$. Let $L_1^1,L_2^1,L_3^1$ and $L_1^2,L_2^2,L_3^2$ be two distinct triples of $k$-rational coplanar lines on $X$. Then there exist $k$-rational lines $L_1^3,L_2^3,L_3^3\subset X$ such that $\{L_i^j\}_{i,j=1}^3$ form a Steiner system.
\end{lem}
\begin{proof}
Lines on $X$ intersect if and only if they are coplanar. Thus Lemma~\ref{lem:each line meets triangle} implies that for each $1\leq i\leq 3$, the line $L_i^1$ meets $L_j^2$ for precisely one of $1\leq j\leq 3$. Symmetrically, the line $L_i^2$ meets $L_j^1$ for precisely one of $1\leq j\leq 3$. Thus the lines $L_i^1$ and $L_j^2$ can be paired off into three couples of intersecting lines. Relabel the lines $L_j^2$ so that $L_i^1\cap L_i^2\neq\varnothing$ for each $i$. Since all of the lines at hand are $k$-rational, each of these pairs gives rise to another $k$-rational line by Corollary~\ref{cor:two-lines-give-third}. Denote the new $k$-rational line coming from $L_i^1$ and $L_i^2$ by $L_i^3$. Then $\{L_i^j\}_{i,j=1}^3$ is the desired Steiner system.
\end{proof}

We are almost ready to prove Theorem~\ref{thm:lines-over-k}. We will phrase our argument in terms of the \textit{intersection graph} $G$ of $X$. The vertices of $G$ correspond to $k$-rational lines on $X$, and two vertices of $G$ are adjacent if the corresponding lines on $X$ intersect. We will reinterpret some of the above geometric facts in terms of the intersection graph, after which we will prove Theorem~\ref{thm:lines-over-k}.

\begin{lem}\label{lem:intersection graph}
Let $X$ be a smooth cubic surface over a field $k$, and let $G$ be its intersection graph. Then:
\begin{enumerate}[(i)]
\item Every edge of $G$ belongs to a 3-cycle.
\item No two 3-cycles in $G$ share an edge.
\item If $G$ contains a 3-cycle, then $G$ is connected.
\item If $G$ contains two 3-cycles that do not share any vertices, then $G$ contains a Steiner system.
\item If $G$ contains four non-adjacent vertices, then $G$ is either the graph given in Figure~\ref{fig:fifteen} or the complement of the Schl\"afli graph.
\end{enumerate}
\end{lem}
\begin{proof}
An edge in $G$ corresponds to two intersecting $k$-rational lines, and a 3-cycle in $G$ corresponds to three pairwise-intersecting (equivalently, coplanar) $k$-rational lines. Thus Corollary~\ref{cor:two-lines-give-third} implies (i).

If $G$ were to contain two 3-cycles that shared an edge, then the four vertices in this configuration would correspond to four coplanar lines on $X$. Letting $H$ be the plane containing these four lines, we would have four lines in $X\cap H$. But B\'ezout's theorem implies that $X\cap H$ contains at most $\deg{X}\cdot\deg{H}=3$ lines, so we deduce (ii) by contradiction.

Lemma~\ref{lem:each line meets triangle} implies that if $G$ contains a 3-cycle, then every vertex in $G$ is adjacent to one of the vertices in the 3-cycle. This in turn implies that $G$ is connected, giving us (iii).

Item (iv) is just a restatement of Lemma~\ref{lem:steiner}. Item (v) follows from Lemmas~\ref{lem:intersection graph of 15} and~\ref{lem:four skew gives 15}. Indeed, four non-adjacent vertices in $G$ correspond to four skew lines on $X$. The $k$-rational lines on $X$ then either belong to the complement of a double six (whose intersection graph is given in Figure~\ref{fig:fifteen}), or all 27 lines on $X$ are $k$-rational (whose intersection graph is the complement of the Schl\"afli graph~\cite{Sch58,Tod32}).
\end{proof}

\begin{proof}[Proof of Theorem~\ref{thm:lines-over-k}]
The method of proof is to list all graphs satisfying the criteria given in Lemma~\ref{lem:intersection graph}. Let $G$ be the intersection graph of a smooth cubic surface over a field $k$. There are no obstructions to $G$ being the empty graph (Figure~\ref{fig:zero}), a single vertex (Figure~\ref{fig:one}), or two disjoint vertices (Figure~\ref{fig:two}). If $G$ contains an edge between two vertices, then $G$ contains a 3-cycle by Lemma~\ref{lem:intersection graph} (i). There are no obstructions to $G$ consisting of three vertices with no edges (Figure~\ref{fig:three skew}) or a 3-cycle (Figure~\ref{fig:three coplanar}).

If $G$ contains at least four vertices, then every vertex of $G$ must belong to a 3-cycle. Indeed, if $G$ contains an edge and therefore a 3-cycle by Lemma~\ref{lem:intersection graph} (i), then $G$ is connected by Lemma~\ref{lem:intersection graph} (iii). It follows that every vertex of $G$ has an incident edge and therefore belongs to a 3-cycle. If $G$ contains four disjoint vertices, then $G$ contains an edge by Lemma~\ref{lem:intersection graph} (v) and hence every vertex of $G$ belongs to a 3-cycle.

It follows that if $G$ contains at least four vertices, then we must obtain $G$ by taking a 3-cycle $C$, adjoining additional 3-cycles to $C$ (with each additional 3-cycle meeting $C$ at precisely one of its vertices), and then adding any edges necessary to satisfy the constraints listed in Lemma~\ref{lem:intersection graph}. One consequence is that if $G$ contains at least four vertices, then $G$ must contain an odd number of vertices, since we begin with a 3-cycle but only add two new vertices for each additional 3-cycle.

There is only one way to construct a graph with five vertices in this manner (Figure~\ref{fig:five}). To obtain a graph with seven vertices, we take two adjoined 3-cycles and adjoin a third 3-cycle. If these three 3-cycles do not share a common vertex, then we have a chain of 3-cycles (see Figure~\ref{fig:seven chain}). Each vertex on one end of this chain must be adjacent to the 3-cycle on the other end, so we need to add edges accordingly (one example illustrated in cyan). We must add more edges to $G$ until every edge belongs to a 3-cycle, but this will then force $G$ to contain two 3-cycles that share an edge (one example illustrated in red). This contradicts Lemma~\ref{lem:intersection graph} (ii), so we conclude that all three 3-cycles must be joined at a common vertex (Figure~\ref{fig:seven}).

If $G$ has nine vertices, then we attach three 3-cycles $T_1,T_2,T_3$ to a fourth 3-cycle $C$. If two of $T_1,T_2,T_3$ are attached to $C$ at the same vertex, then there are four non-adjacent vertices in $G$ (see Figure~\ref{fig:nine skew}). We must therefore add edges until no set of four vertices are mutually non-adjacent, but any choice of such edges will conflict with Lemma~\ref{lem:intersection graph} (ii). We therefore conclude that each $T_i$ must be adjoined to $C$ at a different vertex of $C$. We must then add edges until $G$ satisfies Lemma~\ref{lem:intersection graph}. This process results in the intersection graph of the Steiner system (Figure~\ref{fig:nine}).

To conclude, we will show that if $G$ has more than nine vertices, then $G$ contains four non-adjacent vertices. If $G$ has more than nine vertices, then $G$ is obtained by attaching at least four 3-cycles to a central 3-cycle. By the pigeonhole principle, $G$ will contain one of the graphs in Figure~\ref{fig:nine skew} as a subgraph. If no four vertices of $G$ are non-adjacent, we will need to add edges to $G$, but we have already seen that this will force $G$ to conflict with Lemma~\ref{lem:intersection graph} (ii). We thus conclude that $G$ contains four non-adjacent vertices, so $G$ has 15 or 27 vertices by Lemma~\ref{lem:intersection graph} (v).
\end{proof}

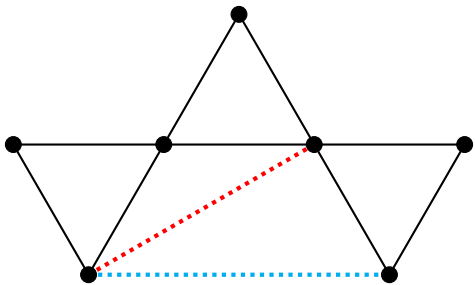
\begin{figure}[t]
\centering
\begin{tikzpicture}
\foreach [count=\i] \coord in {(-1,{-sqrt(3)/2}),(1,{-sqrt(3)/2}),(0,{sqrt(3)/2}),(-2,{-3*sqrt(3)/2}),(-3,{-sqrt(3)/2}),(3,{-sqrt(3)/2}),(2,{-3*sqrt(3)/2})}{
	\node[coordinate] (\i) at \coord {};}
	\draw[thick] (1) -- (2) -- (3) -- (1);
	\draw[thick] (1) -- (4) -- (5) -- (1);
	\draw[thick] (2) -- (6) -- (7) -- (2);
	\draw[ultra thick,cyan,dotted] (4) -- (7);
	\draw[ultra thick,red,dotted] (4) -- (2);
\foreach \i in {1,...,7}{
	\draw[fill=black] (\i) circle (3pt);
	}
\end{tikzpicture}
\caption{Impermissible graph of seven lines}\label{fig:seven chain}
\end{figure}

\begin{figure}[h]
\centering
\begin{subfigure}[c]{.3\linewidth}
\centering
\begin{tikzpicture}
\draw[white!0] (-2,-2) rectangle (2,2);
\foreach \i in {1,...,8}{
	\node[coordinate] (\i) at (-22.5+\i*45:1.5) {};
	\draw[fill=black] (\i) circle (3pt);
	}
	\node[coordinate] (0) at (0,0) {};
	\draw[fill=black] (0) circle (3pt);
	\draw[thick] (0) -- (1) -- (2) -- (0);
	\draw[thick] (0) -- (3) -- (4) -- (0);
	\draw[thick] (0) -- (5) -- (6) -- (0);
	\draw[thick] (0) -- (7) -- (8) -- (0);
	
\foreach \i in {1,3,5,7}{
	\draw[red,fill=red] (\i) circle (3pt);
	}
\end{tikzpicture}
\caption{All $T_i$ sharing a vertex}
\end{subfigure}
\begin{subfigure}[c]{.3\linewidth}
\centering
\begin{tikzpicture}
\draw[white!0] (-2,-2) rectangle (2,2);
\begin{scope}[shift={(0,{sqrt(3)/4})}]
\foreach [count=\i] \coord in {(0,0),(-1,0),(-1/2,{-sqrt(3)/2}),(1,0),(1/2,{-sqrt(3)/2}),(-1/2,{sqrt(3)/2}),(1/2,{sqrt(3)/2}),(0,{-sqrt(3)}),(-1,{-sqrt(3)})}{
	\node[coordinate] (\i) at \coord {};
	\draw[fill=black] (\i) circle (3pt);
	}
	\draw[thick] (1) -- (2) -- (3) -- (1);
	\draw[thick] (1) -- (4) -- (5) -- (1);
	\draw[thick] (1) -- (6) -- (7) -- (1);
	\draw[thick] (3) -- (8) -- (9) -- (3);
	\draw[red,fill=red] (8) circle (3pt);
	\draw[red,fill=red] (2) circle (3pt);
	\draw[red,fill=red] (4) circle (3pt);
	\draw[red,fill=red] (6) circle (3pt);
\end{scope}
\end{tikzpicture}
\caption{Two $T_i$ sharing a vertex}
\end{subfigure}
\caption{Impermissible graphs of nine lines}\label{fig:nine skew}
\end{figure}
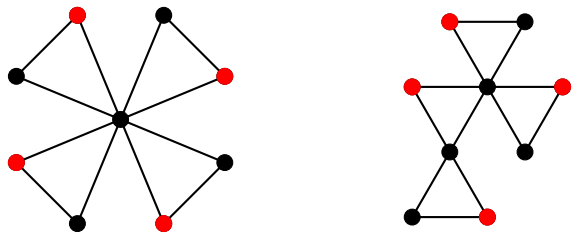

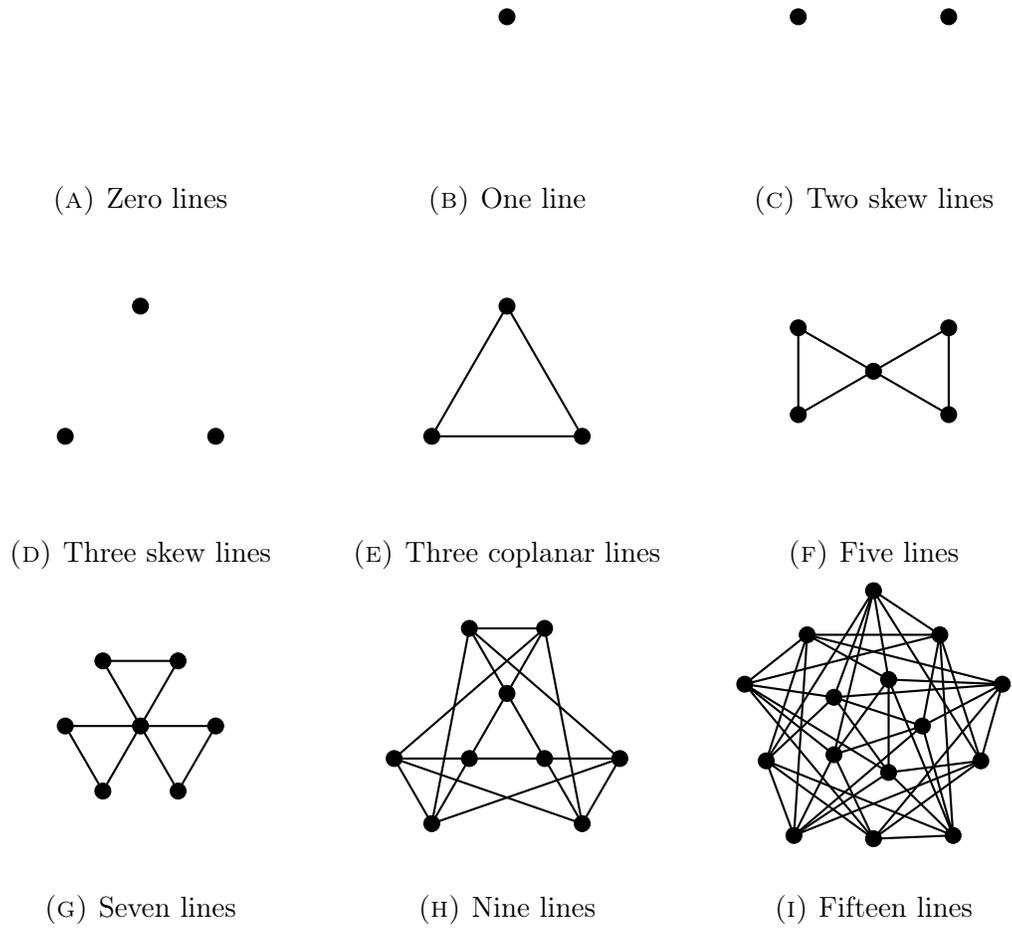
\begin{figure}[p]
\centering
\begin{subfigure}[c]{.3\linewidth}
\centering
\begin{tikzpicture}
\draw[white!0] (-2,-2) rectangle (2,2);
\end{tikzpicture}
\caption{Zero lines}\label{fig:zero}
\end{subfigure}
\begin{subfigure}[c]{.3\linewidth}
\centering
\begin{tikzpicture}
\draw[white!0] (-2,-2) rectangle (2,2);
\draw[fill=black] (0,0) circle (3pt);
\end{tikzpicture}
\caption{One line}\label{fig:one}
\end{subfigure}
\begin{subfigure}[c]{.3\linewidth}
\centering
\begin{tikzpicture}
\draw[white!0] (-2,-2) rectangle (2,2);
\draw[fill=black] (1,0) circle (3pt);
\draw[fill=black] (-1,0) circle (3pt);
\end{tikzpicture}
\caption{Two skew lines}\label{fig:two}
\end{subfigure}

\begin{subfigure}[c]{.3\linewidth}
\centering
\begin{tikzpicture}
\draw[white!0] (-2,-2) rectangle (2,2);
\foreach [count=\i] \coord in {(-1,{-sqrt(3)/2}),(1,{-sqrt(3)/2}),(0,{sqrt(3)/2})}{
	\draw[fill=black] \coord circle (3pt);
	}
\end{tikzpicture}
\caption{Three skew lines}\label{fig:three skew}
\end{subfigure}
\begin{subfigure}[c]{.3\linewidth}
\centering
\begin{tikzpicture}
\draw[white!0] (-2,-2) rectangle (2,2);
\foreach [count=\i] \coord in {(-1,{-sqrt(3)/2}),(1,{-sqrt(3)/2}),(0,{sqrt(3)/2})}{
	\node[coordinate] (\i) at \coord {};
	\draw[fill=black] (\i) circle (3pt);
	}
	\draw[thick] (1) -- (2) -- (3) -- (1);
\end{tikzpicture}
\caption{Three coplanar lines}\label{fig:three coplanar}
\end{subfigure}
\begin{subfigure}[c]{.3\linewidth}
\centering
\begin{tikzpicture}
\draw[white!0] (-2,-2) rectangle (2,2);
\foreach [count=\i] \coord in {(0,0),(-1,{1/sqrt(3)}),(-1,{-1/sqrt(3)}),(1,{1/sqrt(3)}),(1,{-1/sqrt(3)})}{
	\node[coordinate] (\i) at \coord {};
	\draw[fill=black] (\i) circle (3pt);
	}
	\draw[thick] (1) -- (2) -- (3) -- (1);
	\draw[thick] (1) -- (4) -- (5) -- (1);
\end{tikzpicture}
\caption{Five lines}\label{fig:five}
\end{subfigure}

\begin{subfigure}[c]{.3\linewidth}
\centering
\begin{tikzpicture}
\draw[white!0] (-2,-2) rectangle (2,2);
\foreach [count=\i] \coord in {(0,0),(-1,0),(-1/2,{-sqrt(3)/2}),(1,0),(1/2,{-sqrt(3)/2}),(-1/2,{sqrt(3)/2}),(1/2,{sqrt(3)/2})}{
	\node[coordinate] (\i) at \coord {};
	\draw[fill=black] (\i) circle (3pt);
	}
	\draw[thick] (1) -- (2) -- (3) -- (1);
	\draw[thick] (1) -- (4) -- (5) -- (1);
	\draw[thick] (1) -- (6) -- (7) -- (1);
\end{tikzpicture}
\caption{Seven lines}\label{fig:seven}
\end{subfigure}
\begin{subfigure}[c]{.3\linewidth}
\centering
\begin{tikzpicture}
\draw[white!0] (-2,-2) rectangle (2,2);
\foreach [count=\i] \coord in {(-1/2,{-sqrt(3)/4}),(1/2,{-sqrt(3)/4}),(0,{sqrt(3)/4}),(-1,{-3*sqrt(3)/4}),(-3/2,{-sqrt(3)/4}),(3/2,{-sqrt(3)/4}),(1,{-3*sqrt(3)/4}),(-1/2,{3*sqrt(3)/4}),(1/2,{3*sqrt(3)/4})}{
	\node[coordinate] (\i) at \coord {};
	\draw[fill=black] (\i) circle (3pt);
	}
	\draw[thick] (1) -- (2) -- (3) -- (1);
	\draw[thick] (1) -- (4) -- (5) -- (1);
	\draw[thick] (2) -- (6) -- (7) -- (2);
	\draw[thick] (3) -- (8) -- (9) -- (3);
	\draw[thick] (4) -- (6) -- (8) -- (4);
	\draw[thick] (5) -- (7) -- (9) -- (5);
\end{tikzpicture}
\caption{Nine lines}\label{fig:nine}
\end{subfigure}
\begin{subfigure}[c]{.3\linewidth}
\centering
\begin{tikzpicture}
\draw[white!0] (-2,-2) rectangle (2,2);
\foreach \i in {1,...,5}{
	\node[coordinate] (c\i) at (\i*72:.65) {};
	\node[coordinate] (m\i) at (-18+\i*72:1.5) {};
	\node[coordinate] (o\i) at (18+\i*72:1.8) {};
	}
	\draw[thick] (c1) -- (m2) -- (o5) -- (c1);
	\draw[thick] (c2) -- (m3) -- (o1) -- (c2);
	\draw[thick] (c3) -- (m4) -- (o2) -- (c3);
	\draw[thick] (c4) -- (m5) -- (o3) -- (c4);
	\draw[thick] (c5) -- (m1) -- (o4) -- (c5);
	\draw[thick] (c1) -- (c3) -- (o1) -- (c1);
	\draw[thick] (c2) -- (c4) -- (o2) -- (c2);
	\draw[thick] (c3) -- (c5) -- (o3) -- (c3);
	\draw[thick] (c4) -- (c1) -- (o4) -- (c4);
	\draw[thick] (c5) -- (c2) -- (o5) -- (c5);
	\draw[thick] (m1) -- (m2) -- (o2) -- (m1);
	\draw[thick] (m2) -- (m3) -- (o3) -- (m2);
	\draw[thick] (m3) -- (m4) -- (o4) -- (m3);
	\draw[thick] (m4) -- (m5) -- (o5) -- (m4);
	\draw[thick] (m5) -- (m1) -- (o1) -- (m5);
	
\foreach \i in {1,...,5}{
	\draw[fill=black] (c\i) circle (3pt);
	\draw[fill=black] (m\i) circle (3pt);
	\draw[fill=black] (o\i) circle (3pt);
	}
\end{tikzpicture}
\caption{Fifteen lines}\label{fig:fifteen}
\end{subfigure}
\caption{Intersection graphs}
\label{fig:intersection graphs}
\end{figure}

\subsection{Characteristic 2}
While his proof works in any characteristic, B. Segre incorrectly states that Theorem~\ref{thm:lines-over-k} fails in characteristic 2. He then proceeds to describe three smooth cubic surfaces over $\mb{F}_2$ that contain 35, 13, and 6 lines. These line counts contradict the classification of smooth cubic surfaces over $\mb{F}_2$ given by Dickson~\cite{Dic15}. In private communication, J.-P. Serre pointed out to us that Segre's lines are defined set-wise rather than algebraically. That is, Segre implicitly defines a line $L$ to be contained in a smooth cubic surface $X$ if every rational point of $L$ is contained in $X$. Since $\mb{P}^3_{\mb{F}_2}$ contains 15 rational points and 35 rational lines, Segre calculates the lines in his examples by checking which of these 15 points are contained in his cubic surfaces.

Over fields of cardinality at least 3, the set-theoretic and algebraic definitions of line containment for cubic surfaces agree.

\begin{prop}\label{prop:set-vs-algebraic}
Let $k$ be a field of cardinality at least 3. Let $X$ be a cubic surface defined over $k$. Let $L$ be a line defined over $k$. Then $L$ is contained in $X$ if and only if every $k$-rational point of $L$ is contained in $X$.
\end{prop}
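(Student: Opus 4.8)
The forward implication is immediate: if $L\subseteq X$ as closed subschemes, then in particular every $k$-rational point of $L$ lies on $X$. All of the content is in the converse, and my plan is to reduce it to an elementary statement about binary cubic forms over $k$. Since $L$ is defined over $k$, it is cut out by two linear forms with coefficients in $k$, so its space of defining linear relations has a $k$-basis; dually, $L$ is spanned by two $k$-rational points. I would fix such a $k$-rational parameterization $\psi\colon\mb{P}^1_k\to\mb{P}^3_k$ of $L$, say $[s:t]\mapsto[\ell_0:\ell_1:\ell_2:\ell_3]$ with each $\ell_m=a_ms+b_mt$ and $a_m,b_m\in k$.

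Writing $X=\mb{V}(f)$ for a homogeneous $f\in k[x_0,x_1,x_2,x_3]$ of degree $3$, the pullback $g(s,t):=f(\ell_0,\ell_1,\ell_2,\ell_3)$ is a homogeneous polynomial of degree $3$ in $s,t$ with coefficients in $k$. By construction $L\subseteq X$ if and only if $g$ is the zero polynomial, whereas the $k$-rational point $\psi([s:t])$ of $L$ lies on $X$ precisely when $g(s,t)=0$. Hence the hypothesis that every $k$-rational point of $L$ lies on $X$ says exactly that $g$ vanishes at every point of $\mb{P}^1(k)$. Now I would invoke the degree bound for binary forms: a nonzero homogeneous polynomial of degree $3$ in two variables factors into at most three linear forms over $\ovl{k}$, hence vanishes at no more than three points of $\mb{P}^1(\ovl{k})$, and a fortiori at no more than three points of $\mb{P}^1(k)$. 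On the other hand, since $|k|\geq 3$ we have $|\mb{P}^1(k)|=|k|+1\geq 4$. If $g$ were nonzero yet vanished on all of $\mb{P}^1(k)$, it would have at least four projective roots, contradicting the degree bound. Therefore $g\equiv 0$, which is precisely the statement $L\subseteq X$.

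I do not anticipate a genuine obstacle here; the proof is short once the reduction to $g$ is made. The only point requiring care is the bookkeeping between scheme-theoretic containment (the assertion $g\equiv 0$) and the set-theoretic hypothesis (vanishing on $\mb{P}^1(k)$), together with the observation that the bound $|k|\geq 3$ is sharp and is exactly what the argument exploits. Indeed, over $\mb{F}_2$ one has $|\mb{P}^1(k)|=3$, which matches the degree of $g$, so a nonzero binary cubic can vanish at all rational points of the line; this is precisely the source of the discrepancy in Segre's $\mb{F}_2$ examples discussed above, and it is why the hypothesis $|k|\geq 3$ cannot be removed.
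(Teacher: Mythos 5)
Your proof is correct and takes essentially the same route as the paper's: both hinge on the observation that $|\mb{P}^1(k)|=|k|+1\geq 4$ exceeds the degree bound $3$ on the intersection of a line not contained in $X$ with $X$. The only difference is cosmetic---you make the key step explicit and elementary by pulling $f$ back along a $k$-rational parameterization to a binary cubic form and counting its projective roots, whereas the paper simply invokes B\'ezout's theorem for $\deg(L)\cdot\deg(X)=3$.
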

\begin{proof}
If $L\subset X$, then every point of $L$ is contained in $X$. Thus all $k$-rational points of $L$ are contained in $X$. Conversely, suppose all $k$-rational points of $L$ are contained in $X$. Since $L$ is defined over $k$, this line is isomorphic to $\mb{P}^1_k$. If $|k|>2$, then $L\cong\mb{P}^1_k$ contains $|k|+1>3$ points defined over $k$. Since $\deg{L}\cdot\deg{X}=3$, B\'ezout's theorem implies that $L$ must be contained in $X$.
\end{proof}

Proposition~\ref{prop:set-vs-algebraic} fails over $\mb{F}_2$. Indeed, $\mb{P}^1_{\mb{F}_2}$ only contains three $\mb{F}_2$-rational points, so a cubic surface $X$ may intersect a line $L$ in three $\mb{F}_2$-points without containing any points of $L$ not defined over $\mb{F}_2$. This accounts for the discrepancy between Segre's claim and Dickson's theorem~\cite{Dic15} about lines on cubic surfaces over $\mb{F}_2$.

\section{Sufficient criteria for line counts}\label{sec:sufficient}
In this section, we prove Theorem~\ref{thm:main} by blowing up $\mb{P}^2_k$ at appropriate sets of points. The various line counts arise from the arithmetic configurations of the sets at which we blow up. This technique is classical and appears extensively in the study of lines on real cubic surfaces. As we will see in Section~\ref{sec:cusp}, most of the work goes into showing that the relevant sets of points can be arranged in general position over $k$.

\begin{setup}\label{setup:blowing up}
There is a well-known method for constructing the 27 lines on a smooth cubic surface via blow-ups. If $X$ is the smooth cubic surface obtained by blowing up $\mb{P}^2_k$ at the six geometric points $p_1,\ldots,p_6$, then we get the following lines on $X$:
\begin{itemize}
\item $E_i$, the exceptional divisor above $p_i$, for $1\leq i\leq 6$.
\item $C_i$, the strict transform of the unique conic through $\{p_1,\ldots,p_6\}-\{p_i\}$, for $1\leq i\leq 6$.
\item $L_{ij}$, the strict transform of the unique line through $p_i$ and $p_j$, for $1\leq i<j\leq 6$.
\end{itemize}
We also deduce that only pairs among these lines that intersect are $E_i$ and $C_j$ for $i\neq j$, $E_n$ and $L_{ij}$ for $n=i,j$, $C_n$ and $L_{ij}$ for $n=i,j$, and $L_{ij}$ and $L_{mn}$ for $\{i,j\}\cap\{m,n\}=\varnothing$.

Since all lines on $X$ are defined over a separable closure $k^s$ of $k$~\cite{Coo88}, we may take $p_1,\ldots,p_6$ to be $k^s$-rational. We can now check whether $X$ and any of its line are defined over $k$ using Proposition~\ref{prop:Galois-inv}. The surface $X$ is defined over $k$ if and only if the set $\{p_1,\ldots,p_6\}$ is $\Gal(k^s/k)$-fixed. Moreover:
\begin{itemize}
\item $E_i$ is $k$-rational if and only if $p_i$ is $k$-rational.
\item $C_i$ is $k$-rational if and only if the set $\{p_1,\ldots,p_6\}-\{p_i\}$ is $\Gal(k^s/k)$-fixed.
\item $L_{ij}$ is $k$-rational if and only if the set $\{p_i,p_j\}$ is $\Gal(k^s/k)$-fixed.
\end{itemize}
Each possible line count (and configuration) for smooth cubic surfaces arises from partitioning $\{p_1,\ldots,p_6\}$ into Galois orbits.
\begin{itemize}
\item[$(1_6)$] If $p_1,\ldots,p_6$ are all $k$-rational, then all 27 lines on $X$ are $k$-rational.
\item[$(1_4,2_1)$] If $p_1,\ldots,p_4$ are $k$-rational and $\{p_5,p_6\}$ is a Galois orbit, then $L_{56}$ and $E_i,C_j,L_{ij}$ for $1\leq i,j\leq 4$ are the only $k$-rational lines on $X$. Thus $X$ has 15 $k$-rational lines.
\item[$(1_3,3_1)$] If $p_1,p_2,p_3$ are $k$-rational and $\{p_4,p_5,p_6\}$ is a Galois orbit, then $E_i,C_j,L_{ij}$ for $1\leq i,j\leq 3$ are the only $k$-rational lines on $X$. Thus $X$ has 9 $k$-rational lines.
\item[$(1_2,2_2)$] If $p_1,p_2$ are $k$-rational, $\{p_3,p_4\}$ is a Galois orbit, and $\{p_5,p_6\}$ is a Galois orbit, then $E_1,E_2,C_1,C_2,L_{12},L_{34}$, and $L_{56}$ are the only $k$-rational lines on $X$. Thus $X$ has 7 $k$-rational lines.
\item[$(1_2,4_1)$] If $p_1,p_2$ are $k$-rational and $\{p_3,\ldots,p_6\}$ is a Galois orbit, then $E_1,E_2,C_1,C_2$, and $L_{12}$ are the only $k$-rational lines on $X$. Thus $X$ has 5 $k$-rational lines.
\item[$(1_1,2_1,3_1)$] If $p_1$ is $k$-rational, $\{p_2,p_3\}$ is a Galois orbit, and $\{p_4,p_5,p_6\}$ is a Galois orbit, then $E_1,C_1$, and $L_{23}$ are the only $k$-rational lines on $X$. Thus $X$ contains 3 skew $k$-rational lines.
\item[$(2_3)$] If $\{p_i,p_j\}$ is a Galois orbit for $(i,j)=(1,2)$, $(3,4)$, and $(5,6)$, then $L_{12}$, $L_{34}$, and $L_{56}$ are the only $k$-rational lines on $X$. Thus $X$ contains 3 $k$-rational lines that are pairwise intersecting.
\item[$(1_1,5_1)$] If $p_1$ is $k$-rational and $\{p_2,\ldots,p_6\}$ is a Galois orbit, then $E_1$ and $C_1$ are the only $k$-rational lines on $X$. Thus $X$ contains 2 $k$-rational lines.
\item[$(2_1,4_1)$] If $\{p_1,p_2\}$ and $\{p_3,\ldots,p_6\}$ are each Galois orbits, then $L_{12}$ is the only $k$-rational line on $X$. Thus $X$ contains 1 $k$-rational line.
\item[$(3_2)$] If $\{p_1,p_2,p_3\}$ and $\{p_4,p_5,p_6\}$ are each Galois orbits, then $X$ has no $k$-rational lines.
\item[$(6_1)$] If $\{p_1,\ldots,p_6\}$ is a Galois orbit, then $X$ has no $k$-rational lines.
\end{itemize}
\end{setup}

\subsection{Blowing up on the cuspidal cubic}\label{sec:cusp}
In order to obtain a smooth cubic surface by blowing up $\mb{P}^2_k$ at one of the sets described in Setup~\ref{setup:blowing up}, we need to ensure that the points at which we are blowing up lie in general position. (Over $\ovl{k}$, our collection of points splits into six points. These six points are said to lie in \textit{general position} if no three are contained in a line and all six are not contained in a conic.)

If we require our six points to lie on the cuspidal cubic $C=\mb{V}(y^3-x^2z)$, the parameterization $C=\{\pcoor{1:t:t^3}\}$ gives us an algebraic method for checking whether the points lie in general position. Three distinct points $\pcoor{1:t_i:t_i^3}$ lie on the line $\mb{V}(ax+by+cz)$ if and only if each $t_i$ is a root of $F(t)=a+bt+ct^3$. The sum of these roots is a scalar multiple of the coefficient of the degree 2 term of $F(t)$, so the points $\pcoor{1:t_i:t_i^3}$ lie on a shared line if and only if $t_1+t_2+t_3=0$. Similarly, six distinct points $\pcoor{1:t_i:t_i^3}$ lie on the conic $\mb{V}(ax^2+bxy+cy^2+xz+eyz+fz^2)$ if and only if each $t_i$ is a root of $G(t)=a+bt+ct^2+dt^3+et^4+ft^6$. The sum of these roots is a scalar multiple of the coefficient of the degree 5 term of $G(t)$, so $\pcoor{1:t_i:t_i^3}$ lie on a shared conic if and only if $t_1+\ldots+t_6=0$.

In order to find six points on the cuspidal cubic that lie in general position in $\mb{P}^2_k$, it therefore suffices to construct a degree 6 monic polynomial $G(t)$ such that:
\begin{enumerate}[(i)]
\item $G(t)$ has no repeated roots.
\item No three roots of $G(t)$ sum to zero.
\item The degree 5 coefficient of $G(t)$ is not zero.
\end{enumerate}

In Section~\ref{sec:proof of sufficient}, we will prove Theorem~\ref{thm:main} by constructing various degree 6 monic polynomials that satisfy the above criteria.

\subsection{Proof of Theorem~\ref{thm:main}}\label{sec:proof of sufficient}
We are now ready to prove Theorem~\ref{thm:main}. We treat each Galois partition of $\{p_1,\ldots,p_6\}$ (see Setup~\ref{setup:blowing up}) in a separate lemma. We remark that the cardinality assumptions on $k$ in each of these lemmas need not be optimal --- for example, Lemma~\ref{lem:27 lines} requires $|k|\geq 13$ in order to find 6 $k$-rational points in general position on the cuspidal cubic. If one does not restrict to the cusp, then there are collections of 6 $k$-rational points in $\mb{P}^2_k$ in general position. However, we will be content to restrict our search to points on the cuspidal cubic. We also remark that the proofs in this section are fairly computational. This is intentional --- in case some reader wishes to construct a smooth cubic surface over a given field with a desired line count, these proofs outline how to find an explicit set of points at which to blow up.

\begin{rem}\label{rem:not 0}
When constructing a degree 6 polynomial with at least one rational root, we will always avoid taking 0 to be one of our roots. In effect, this means that none of our 6 points in the plane will lie on the cusp point of the cuspidal cubic. The reason for this is to allow us to apply the Cayley--Bacharach theorem when constructing sets of 8 points in general position in \cite{KMSU25}.
\end{rem}

Throughout this section, all irreducible polynomials that we work with are assumed to be separable. We will make frequent use of the following lemma, which allows us to furnish monic, separable, irreducible polynomials with a prescribed penultimate coefficient.

\begin{lem}\label{lem:irreducible}
Let $k$ be a field. Pick $a\in k$. Assume that $k$ admits a finite separable extension of degree $n\geq 2$. If $\Char{k}=n=2$, we assume that $a\neq 0$. Then there is a monic, separable, irreducible polynomial $f(t)\in k[t]$ of degree $n$ whose degree $n-1$ coefficient is $a$.
\end{lem}
\begin{proof}
When $k$ is a finite field, this is a special case of the Hansen--Mullen conjecture, which was proved by Wan~\cite{Wan97} and Ham--Mullen~\cite{HM98}. We may thus assume that $k$ is an infinite field, although we will not need this assumption in most cases. In general, the assumption that $k$ admits a finite separable extension of degree $n$ implies that there is a monic, separable, irreducible polynomial $m(t)\in k[t]$ of degree $n$. The goal is to use $m(t)$ to find another monic, separable, irreducible polynomial $f(t)$ with the prescribed coefficient in degree $n-1$.

To begin, assume that $\Char{k}=0$ or that $\Char{k}$ does not divide $n$. 
\begin{enumerate}[(i)]
\item If the degree $n-1$ coefficient of $m(t)$ is $c\neq 0$ and the prescribed coefficient is $a\neq 0$, then set $f(t):=(a^{-1}c)^{-n}\cdot m(a^{-1}ct)$. The separability and irreducibility of $f(t)$ follow from that of $m(t)$, and the degree $n-1$ coefficient of $f(t)$ is $(a^{-1}c)^{-n}\cdot c(a^{-1}c)^{n-1}=a$, as desired.
\item If $c=0$ and $a\neq 0$, then set $g(t):=m(t+1)$ (which is again separable and irreducible). By the binomial theorem, the degree $n-1$ coefficient of $g(t)$ is $\binom{n}{1}=n$ (or $n\mod\Char{k}$ in positive characteristic), which is non-zero since we have assumed that $\Char{k}=0$ or $\Char{k}\nmid n$. We can then set $f(t)=(a^{-1}n)^{-n}\cdot g(a^{-1}nt)$ as in (i).
\item If $c\neq 0$ and $a=0$, then set $f(t):=m(t-\frac{c}{n})$. Since $-c$ is the sum of the roots of $m(t)$, the sum of the roots of $f(t)$ is $-c+n\cdot\frac{c}{n}=0$, which is the desired degree $n-1$ coefficient.
\item If $a=c=0$, then we simply take $f(t):=m(t)$.
\end{enumerate}

Now suppose that $\Char{k}=p$ and $n=pq$ for some integer $q>0$. In characteristic $p$, an irreducible polynomial $m(t)\in k[t]$ is separable if and only if it is not of the form $m(t)=h(t^p)$ for some polynomial $h(t)\in k[t]$. Let $m(t)\in k[t]$ be a monic, separable, irreducible polynomial of degree $n$. Then there exists $1\leq d\leq n$ with $p\nmid d$ such that the degree $d$ term of $m(t)$ is non-zero. Let $c$ be the degree $d$ coefficient of $m(t)$.
\begin{enumerate}[(i)]
\setcounter{enumi}{4}
\item If $d=n-1$ and the prescribed coefficient is $a\neq 0$, then set $f(t):=(a^{-1}c)^{-n}\cdot m(a^{-1}ct)$ as in (i).
\item If $d=1$ and $a\neq 0$, then the scaled reciprocal polynomial $m^*(t):=m(0)^{-1}t^n\cdot m(t^{-1})$ is again separable and irreducible with degree $n-1$ coefficient $m(0)^{-1}c\neq 0$. We then set $f(t):=(a^{-1}m(0)^{-1}c)^{-n}\cdot m^*(a^{-1}m(0)^{-1}ct)$.
\item If the degree 1 and $n-1$ terms of $m(t)$ are zero and $a\neq 0$, then consider $g_x(t):=m(t+x)$. The degree 1 term of $g_x(t)$ is $c_x:=\sum_{i=2}^{n-2} im_ix^{i-1}$, where $m_i$ is the degree $i$ coefficient of $m(t)$. Since $c_x$ is a degree $n-2$ polynomial in $x$, our assumption that $|k|=\infty>n-2$ implies that there exists $\alpha\in k$ such that $c_\alpha\neq 0$. It follows that the scaled reciprocal polynomial $g_\alpha^*(t)$ is a monic, separable, irreducible polynomial with degree $n-1$ coefficient $m(\alpha)^{-1}c_\alpha\neq 0$, so we can set $f(t):=(a^{-1}m(\alpha)^{-1}c_\alpha)^{-n}\cdot g_\alpha^*(a^{-1}m(\alpha)^{-1}c_\alpha t)$.
\item If $c\neq 0$ and $a=0$, then let $\alpha$ be a root of $m(t)$. It suffices to find $\beta\in k(\alpha)$ with trace zero such that $\beta$ is not contained in any proper subextension of $k(\alpha)/k$. Once we have done so, it will follow that $k(\beta)=k(\alpha)$ is separable over $k$, and hence the minimal polynomial of $\beta$ will be a monic, separable, irreducible polynomial of degree $n$. By picking $\beta$ with trace zero, the trace of its minimal polynomial (i.e. the degree $n-1$ coefficient) will be zero as well.

The trace defines a $k$-linear map $\mathrm{tr}:k(\alpha)\to k$, so $\mathrm{ker}(\mathrm{tr})$ is a $k$-vector space of dimension $n-1$. Since $k(\alpha)/k$ is a separable extension, there are only finitely many subextensions $k\subset L\subset k(\alpha)$. Moreover, since $[k(\alpha):k]=[k(\alpha):L]\cdot[L:k]$, each subextension $L$ is a vector subspace of $k(\alpha)$ of dimension at most $n/2$. It suffices to choose $\beta\in\mathrm{ker}(\mathrm{tr})-\bigcup_{k\subset L\subset k(\alpha)}L$. If $n>2$, then a finite union of at most $n/2$-dimensional subspaces cannot cover an $(n-1)$-dimensional subspace. Paired with the assumption that $k$ is infinite, it follows that $\mathrm{ker}(\mathrm{tr})-\bigcup_{k\subset L\subset k(\alpha)}L$ is non-empty if $n>2$. The $n=2$ case is already solved by case (iii) and our assumption that $a\neq 0$ if $\Char{k}=n=2$.
\item If $a=c=0$, then we take $f(t):=m(t)$ as in case (iv).\qedhere
\end{enumerate}
\end{proof}

\begin{lem}[$1_6$]\label{lem:27 lines}
Let $k$ be a field with $|k|\geq 16$. Then there is a smooth cubic surface over $k$ with 27 $k$-rational lines.
\end{lem}
\begin{proof}
It suffices to find 6 $k$-rational points on the cuspidal cubic $C$ that are in general position. If $\Char{k}$ is 0 or at least 17, then $G(t)=\prod_{i=1}^6 (t+i)$ satisfies the desired criteria. Otherwise, write $\Char{k}=p$.
\begin{itemize}
\item If $5\leq p\leq 13$, then there exists $\alpha\in k-\mb{F}_p$, and $G(t)=(t+1)(t+2)(t+\alpha)(t+\alpha+1)(t+\alpha+2)(t+\alpha+3)$ satisfies the desired criteria.
\item If $p=3$, then there exist $\alpha,\beta\in k-\mb{F}_3$ such that $\{1,\alpha,\beta\}$ are $\mb{F}_3$-linearly independent. Indeed, if $k$ is a finite extension of $\mb{F}_3$, then $|k|\geq 27$ and hence $\dim_{\mb{F}_3}{k}\geq 3$. If $k$ is an infinite extension of $\mb{F}_3$, then take $\alpha\in k$ to be transcendental over $\mb{F}_3$ and set $\beta=\alpha^2$. In either case, $G(t)=(t+1)(t+2)(t+\alpha)(t+\alpha+1)(t+\beta)(t+\beta+1)$ satisfies the desired criteria.
\item If $p=2$, then there exist $\alpha,\beta,\gamma\in k-\mb{F}_2$ such that $\{1,\alpha,\beta,\gamma\}$ are $\mb{F}_2$-linearly independent. Indeed, if $k$ is a finite extension of $\mb{F}_2$, then $|k|\geq 16$ and hence $\dim_{\mb{F}_2}{k}\geq 4$. If $k$ is an infinite extension of $\mb{F}_2$, then take $\alpha$ to be transcendental over $\mb{F}_2$ and set $\beta=\alpha^2$ and $\gamma=\alpha^3$. In either case, $G(t)=t(t+1)(t+\alpha)(t+\beta)(t+\gamma)(t+\alpha+\beta+\gamma)$ satisfies the desired criteria.\qedhere
\end{itemize}
\end{proof}

\begin{lem}[$1_4,2_1$]\label{lem:15 lines}
Let $k$ be a field with $|k|\geq 8$. Assume that $k$ admits a separable degree 2 extension. Then there is a smooth cubic surface over $k$ with 15 $k$-rational lines.
\end{lem}
\begin{proof}
Let $m(t)=t^2+at+b\in k[t]$ be a separable irreducible polynomial. It suffices to pick four distinct elements $r_1,r_2,r_3,r_4\in k^\times$ such that $\sum_{i=1}^4 r_i\neq a$ and $\sum_{i\neq j}r_i\neq 0$ for $1\leq j\leq 4$ (since the roots of $m(t)$ are not defined over $k$ and are thus not equal to $-r_i-r_j$). Once we have done so, $G(t)=m(t)\cdot\prod_{i=1}^4(t-r_i)$ will satisfy the desired criteria. (The requirement that $r_1,\ldots,r_4\neq 0$ is explained in Remark~\ref{rem:not 0}.)

Pick distinct $r_1,r_2\in k^\times$. We then need to choose $r_3\in k^\times-\{r_1,r_2\}$ such that $r_3\neq -r_1-r_2$, so we may freely pick $r_3\in k^\times-\{r_1,r_2,-r_1-r_2\}$. Finally, we need to choose $r_4\in k^\times-\{r_1,r_2,r_3,-r_1-r_2,-r_1-r_3,-r_2-r_3\}$. Since we have assumed $|k|\geq 8$, such an $r_4$ exists, and we are done.
\end{proof}

\begin{lem}[$1_3,3_1$]\label{lem:9 lines}
Let $k$ be a field with $|k|\geq 7$. Assume that $k$ admits a separable degree 3 extension. Then there is a smooth cubic surface over $k$ with 9 $k$-rational lines.
\end{lem}
\begin{proof}
By Lemma~\ref{lem:irreducible}, there is a separable irreducible polynomial $m(t)=t^3+at^2+bt+c\in k[t]$ with $a\neq 0$. Since $a\neq 0$, the three roots of $m(t)$ do not sum to zero. Since $m(t)$ is irreducible of degree 3, no two roots of $m(t)$ sum to an element of $k$ (or else the third root would belong to $k$ and $m(t)$ would not be irreducible). It thus suffices to find distinct $r_1,r_2,r_3\in k^\times$ such that $r_1+r_2+r_3\neq 0$ and $r_1+r_2+r_3\neq a$. (The requirement that $r_1,r_2,r_3\neq 0$ is explained in Remark~\ref{rem:not 0}.) Once we have done so, $G(t)=m(t)\cdot\prod_{i=1}^3(t-r_i)$ will satisfy the desired criteria.

Pick distinct $r_1,r_2\in k^\times$. We may then freely pick $r_3\in k^\times-\{r_1,r_2,-r_1-r_2,a-r_1-r_2\}$. Since we have assumed $|k|\geq 7$, such an $r_3$ exists.
\end{proof}

\begin{lem}[$1_2,2_2$]\label{lem:7 lines}
Let $k$ be a field with $|k|\geq 5$. Assume that $k$ admits a separable degree 2 extension. Then there is a smooth cubic surface over $k$ with 7 $k$-rational lines.
\end{lem}
\begin{proof}
Let $m(t)=t^2+at+b\in k[t]$ be a separable irreducible polynomial with $a\neq 0$ (which exists by Lemma~\ref{lem:irreducible}). First suppose $\Char{k}\neq 2$, so that $a\neq -a$. Then $n(t):=m(-t)=t^2-at+b$ also does not have any $k$-rational roots, so $n(t)$ is also separable and irreducible over $k$. Pick $r_1\in k^\times-\{\pm a\}$ and $r_2\in k^\times-\{\pm a,\pm r_1\}$, which is possible since $|k|\geq 5$. Then $G(t)=(t-r_1)(t-r_2)\cdot m(t)\cdot n(t)$ satisfies the desired criteria. Indeed, the degree 5 term is $a-a-r_1-r_2=-(r_1+r_2)$, which is non-zero by our choice of $r_2$. The sum of one (respectively, two) rational roots of $G(t)$ with two (respectively, one) non-rational roots of $G(t)$ cannot be zero (since $r_1,r_2\neq\pm a$). Finally, all four roots of $m(t)$ and $n(t)$ sum to zero, so no three of these roots can sum to zero (or else the fourth root would be $0\in k$, a contradiction).

If $\Char{k}=2$, then pick $a\in k-\mb{F}_2$ (which we may do since $|k|\geq 5$). By Lemma~\ref{lem:irreducible}, there exists $b$ such that $m(t)=t^2+at+b$ is separable and irreducible. Take $n(t):=m(t+1)=t^2+at+a+b+1$ as our second irreducible polynomial, and note that $m(t)\neq n(t)$ since $a\neq 1$. Now pick $c\in k^\times-\{1,a\}$ (which we may do since $|k|\geq 5$). Then $G(t)=(t+1)(t+c)\cdot m(t)\cdot n(t)$ satisfies the desired criteria. Indeed, the degree 5 term is $1+c\neq 0$, sums of three roots involving one or two rational roots cannot be zero, and the sum of all four non-rational roots is $a+a=0$, so any three of these roots cannot sum to zero.
\end{proof}

\begin{lem}[$1_2,4_1$]\label{lem:5 lines}
Let $k$ be a field with $|k|\geq 11$. Assume that $k$ admits a separable degree 4 extension. Then there is a smooth cubic surface over $k$ with 5 $k$-rational lines.
\end{lem}
\begin{proof}
Let $m(t)=t^4+at^2+bt+c\in k[t]$ be a separable irreducible polynomial, which exists by Lemma~\ref{lem:irreducible}. Since the sum of the roots of $m(t)$ is zero, no three roots of $m(t)$ can sum to zero. Let $s_1,\ldots,s_6$ be the $\binom{4}{2}$ sums of pairs of roots of $m(t)$. We want to pick $r_1,r_2\in k^\times$ such that $r_1,r_2\neq s_i$ for each $i$. There are at most six elements to avoid (in the case that each $s_i\in k$ and all are distinct). We thus pick $r_1\in k^\times-\{s_1,\ldots,s_6\}$ and $r_2\in k^\times-\{\pm r_1,s_1,\ldots,s_6\}$. Since $|k|\geq 11$, such a choice of $r_1,r_2$ is possible. Now $G(t)=(t-r_1)(t-r_2)\cdot m(t)$ satisfies the desired criteria, since the sum of all roots is $-(r_1+r_2)\neq 0$ and any sum of three roots involving $r_1$ or $r_2$ cannot be zero.
\end{proof}

\begin{lem}[$1_1,2_1,3_1$]\label{lem:3 lines, skew}
Let $k$ be a field with $|k|\geq 4$. Assume that $k$ admits separable extensions of degree 2 and 3. Then there is a smooth cubic surface over $k$ with 3 $k$-rational lines that are skew.
\end{lem}
\begin{proof}
Pick $a\in k^\times-\{-1\}$. By Lemma~\ref{lem:irreducible}, there are separable irreducible polynomials $m(t)=t^3+at^2+bt+c\in k[t]$ and $n(t)=t^2+t+d\in k[t]$. Since $a\neq 0$, the three roots of $m(t)$ do not sum to zero. Note that no three of the five roots of $m(t)$ and $n(t)$ sum to zero. Indeed, the roots of $n(t)$ sum to $-1$, and no root of $m(t)$ can be an element of $k$. If two roots of $m(t)$ and one root of $n(t)$ sum to zero, then the remaining roots $r_m,r_n$ of $m(t),n(t)$, respectively, sum to $-a-1$. But this would imply that $r_m=-r_n-a-1$ is a root of the irreducible polynomial $n(-t-a-1)\in k[t]$, which in turn implies that $n(-t-a-1)$ must be an irreducible factor of $m(t)$. Since $m(t)$ is irreducible, such a factor does not exist.

Finally, take $s\in k^\times-\{1,-a-1,\}$. Then $G(t)=(t+s)\cdot m(t)\cdot n(t)$ satisfies the desired criteria. Indeed, the degree 5 term is $a+s+1\neq 0$. Moreover, no three roots of $G(t)$ sum to zero. We have already seen that no three roots of $m(t)\cdot s(t)$ sum to zero, and no two roots of $m(t)$ can sum to an element of $k$. The remaining possibility is that the sum of the roots of $n(t)$ is $s$, but we have chosen $s\neq 1$.
\end{proof}

\begin{lem}[$2_3$]\label{lem:3 lines}
Let $k$ be a field with $|k|\geq 5$. Assume that $k$ admits a separable extension of degree 2. Then there is a smooth cubic surface over $k$ with 3 $k$-rational lines that are coplanar.
\end{lem}
\begin{proof}
First assume $\Char{k}=2$. Pick a separable irreducible polynomial $m(t)=t^2+t+a\in k[t]$ (which exists due to Lemma~\ref{lem:irreducible}). Now pick distinct $\alpha,\beta\in k-\{0,1\}$ such that $\alpha\neq\beta+1$ (which exist since $|k|\geq 5$), and set $n(t)=m(t+\alpha)=t^2+t+a+\alpha^2+\alpha$ and $p(t)=m(t+\beta)=t^2+t+a+\beta^2+\beta$. Since $\alpha,\beta\neq 0,1$, we have $\alpha^2+\alpha\neq 0$ and $\beta^2+\beta\neq 0$. Since $\alpha\neq\beta$ and $\alpha\neq\beta+1$, we have that $(\alpha,\beta)$ is not a solution to $(x+y)(x+y+1)=0$ and hence $\alpha^2+\alpha\neq\beta^2+\beta$. In particular, the polynomials $m(t),n(t)$, and $p(t)$ are all distinct, so their six collective roots must also be distinct. Moreover, the sum of the four roots of any two of $m(t),n(t),p(t)$ is zero, so no three of these roots can sum to zero. The sum of all six roots is 1, so it remains to check that the sum of three roots, one from each of $m(t),n(t),p(t)$, cannot be zero. Then $G(t)=m(t)\cdot n(t)\cdot p(t)$ will satisfy the desired criteria.

Let $\mu_1,\mu_2$ be the roots of $m(t)$. The roots of $n(t)$ are $\mu_1+\alpha,\mu_2+\alpha$, and the roots of $p(t)$ are $\mu_1+\beta,\mu_2+\beta$. The sum of three roots, one from each of $m(t),n(t),p(t)$, is therefore either $3\mu_i+\alpha+\beta=\mu_i+\alpha+\beta$ for $i\in\{1,2\}$ or $2\mu_i+\mu_j+\alpha+\beta=\mu_j+\alpha+\beta$ for $\{i,j\}=\{1,2\}$. Since $\alpha,\beta\in k$ and $\mu_i,\mu_j\not\in k$, these sums are non-zero.

Now assume $\Char{k}\neq 2$. If $\Char{k}\neq 3$, let $a=2$. If $\Char{k}=3$, pick $a\in k-\mb{F}_3$. Then there is a separable irreducible polynomial $m(t)=t^2+at+b\in k[t]$. Pick $\gamma\in k-\{0,\pm 1,-\frac{a}{2}-1\}$, and set $n(t)=m(t+1)$ and $p(t)=m(t+\gamma)$. Since $\gamma\neq 0,1$, the polynomials $m(t),n(t),p(t)$ are distinct. The sum of the four roots of any two of $m(t),n(t),p(t)$ is $k$-rational, so no three of these roots can sum to zero (or else the fourth root would be rational). The sum of all six roots is $-a-2-2\gamma\neq 0$ (by our choice of $\gamma$), so it remains to check that the sum of three roots, one from each of $m(t),n(t),p(t)$, cannot be zero. Then $G(t)=m(t)\cdot n(t)\cdot p(t)$ will satisfy the desired criteria.

Let $\mu_1,\mu_2$ be the roots of $m(t)$. The roots of $n(t)$ are $\mu_1-1,\mu_2-1$, and the roots of $p(t)$ are $\mu_1-\gamma,\mu_2-\gamma$. The sum of three roots, one from each of $m(t),n(t),p(t)$, is either $3\mu_i-1-\gamma$ or $2\mu_i+\mu_j-1-\gamma=\mu_i-a-1-\gamma$. We have assumed that $\gamma\neq -1$, so $3\mu_i-1-\gamma$ is not zero even in characteristic 3. Since $1,a,\gamma\in k$ and $\mu_1,\mu_2\not\in k$, it follows that neither of these sums can be zero.
\end{proof}

\begin{lem}[$1_1,5_1$]\label{lem:2 lines}
Let $k$ be a field. Assume that $k$ admits a separable extension of degree 5. Then there is a smooth cubic surface over $k$ with 2 $k$-rational lines.
\end{lem}
\begin{proof}
Pick a separable irreducible polynomial $m(t)=t^5+at^3+bt^2+ct+d$. We will set $G(t)=(t+1)\cdot m(t)$. If three roots of $m(t)$ sum to 0, then so do the remaining two roots of $m(t)$. If two roots of $m(t)$ sum to 1, then the roots of $G(t)$ will sum to 0. It thus suffices to show that no two roots of $m(t)$ can sum to 0 or 1.

In characteristic 2, two roots of $m(t)$ summing to zero implies that $m(t)$ has a repeated root, which contradicts our assumption that $m(t)$ is separable. If $\Char{k}\neq 2$ and $r$ is a root of $m(t)$, then $0=m(r)+m(-r)=2br^2+2d$. If $b\neq 0$, then the minimal polynomial of $r$ has degree less than 5, contradicting the irreducibility of $m(t)$. Otherwise, we have $2d=0$, which implies that 0 is a root of $m(t)$ and again contradicts the irreducibility of $m(t)$. If two roots of $m(t)$ sum to 1, then there is a root $r$ such that $m(r)=m(1-r)=0$. Thus $r$ is a root of
\begin{align*}
m(t)+m(1-t)&=5t^4-10t^3+(10+3a+2b)t^2\\
&-(5+3a+2b+c)t+1+a+b+c+2d.
\end{align*} 
If $\Char{k}\neq 5$, then $m(t)+m(1-t)$ is a non-zero polynomial of degree strictly less than 5. This again contradicts the irreducibility of $m(t)$. If $\Char{k}=5$, then $m(t)+m(1-t)$ is identically zero only if $b=-\frac{3a}{2}=a$, $c=0$, and $d=-\frac{1+a+b+c}{2}=\frac{2+a}{4}=3+4a$. If $m(t)+m(1-t)$ is identically zero, then take $G(t)=(t+1)\cdot m(t+1)$. Since $m(t+1)=t^5+at^3+4at^2+a+4$, we follow the previous arguments to find that no two roots of $m(t+1)$ sum to 0 or 1, as desired.
\end{proof}

\begin{lem}[$2_1,4_1$]\label{lem:1 line}
Let $k$ be a field with $|k|\geq 8$. Assume that $k$ admits separable extensions of degree 2 and 4. Then there is a smooth cubic surface over $k$ with 1 $k$-rational line.
\end{lem}
\begin{proof}
Pick $a\in k^\times$. Let $m(t)=t^2+at+b$ and $n(t)=t^4+ct^2+dt+e$ be separable irreducible polynomials. Since all four roots of $n(t)$ sum to zero, no three of these roots can sum to zero. All six roots of $m(t)$ and $n(t)$ sum to $-a\neq 0$. Moreover, the two roots of $m(t)$ sum to $-a\in k$, so no root of $n(t)$ can yield zero when summed with the roots of $m(t)$. It remains to show that a root of $m(t)$ and two roots of $n(t)$ cannot sum to zero. We then set $G(t)=m(t)\cdot n(t)$.

Let $\nu_1,\ldots,\nu_4$ be the roots of $n(t)$. We want to guarantee that each of the roots of $m(t)$ are not of the form $-(\nu_i+\nu_j)$ for some $i,j$. If the splitting field of $m(t)$ is not a subfield of $k(\nu_1)$, then we are done. Otherwise, let $r_{i,j}\in k$ be the trace of $-(\nu_i+\nu_j)$, so that the minimal polynomial of $-(\nu_i+\nu_j)$ has penultimate coefficient $-r_{i,j}$. If we pick $a\in k^\times-\bigcup_{1\leq i<j\leq 6}\{-r_{i,j}\}$ (which is possible since $|k|\geq 8$), then a root of $m(t)$ and two roots of $n(t)$ cannot sum to zero.
\end{proof}

\begin{lem}[$3_2$]\label{lem: 0 lines, 3-3}
Let $k$ be a field with $|k|\geq 4$. Assume that $k$ admits a separable extension of degree 3. Then there is a smooth cubic surface over $k$ with no $k$-rational lines.
\end{lem}
\begin{proof}
First assume $\Char{k}$ is not 2 or 3. Pick separable irreducible polynomials $m(t)=t^3+2t^2+at+b$ and $n(t)=t^3+t^2+ct+d$. By design, the three roots of $m(t)$ and $n(t)$, respectively, do not sum to zero, and all six roots sum to $-3\neq 0$. It remains to show that one root of $m(t)$ and two roots of $n(t)$ do not sum to zero (with the same argument holding for two roots of $m(t)$ and one root of $n(t)$). Once we have done so, $G(t)=m(t)\cdot n(t)$ will satisfy the desired criteria.

Let $\mu_1,\mu_2,\mu_3$ and $\nu_1,\nu_2,\nu_3$ be the roots of $m(t)$ and $n(t)$, respectively. Assume $\mu_1+\nu_1+\nu_2=0$. Then $\nu_3=\mu_1+\sum_{i=1}^3\nu_i=\mu_1+1$. The minimal polynomial of $\mu_1+1$ is $m(t-1)$, so this implies that $m(t-1)=n(t)$. But the degree 2 coefficient of $m(t-1)$ is $2-3=-1$, and the degree 2 coefficient of $n(t)$ is 1. These are not equal when $\Char{k}\neq 2$.

Now assume $\Char{k}=p$ is 2 or 3. Since $|k|\geq 4$, we may pick $\alpha\in k-\mb{F}_p$. Pick separable irreducible polynomials $m(t)=t^3+\alpha t^2+at+b$ and $n(t)=t^3+t^2+ct+d$. Again, the three roots of each of these polynomials do not sum to zero, and their six roots sum to $-\alpha-1\neq 0$. Using the same notation as before, if $\mu_1+\nu_1+\nu_2=0$, then $\nu_3=\mu_1+1$. The minimal polynomial of $\mu_1+1$ is $m(t-1)=n(t)$, but the respective degree 2 coefficients are then $\alpha-3$ and 1. Since $\alpha\in k-\mb{F}_p$, it follows that $\alpha-3\neq 1$.
\end{proof}

\begin{lem}[$6_1$]\label{lem: 0 lines, 6}
Let $k$ be a field with $|k|\geq 23$. Assume that $k$ admits a separable extension of degree 6. Then there is a smooth cubic surface over $k$ with no $k$-rational lines.
\end{lem}
\begin{proof}
First, assume that $\Char{k}\neq 3$. Let $m(t)=t^6+t^5+a_4t^4+\ldots+a_0\in k[t]$ be a separable irreducible polynomial with roots $r_1,\ldots,r_6$. If no three of the roots of $m(t)$ sum to zero, then we set $G(t)=m(t)$. Otherwise, consider $m(t-\alpha)$ for some $\alpha\in k^\times$, whose roots are given by $r_1+\alpha,\ldots,r_6+\alpha$. If $r_1+r_2+r_3=0$, then $(r_1+\alpha)+(r_2+\alpha)+(r_3+\alpha)=3\alpha\neq 0$. However, it may happen that some other roots satisfy $r_i+r_j+r_\ell+3\alpha=0$. If this happens, we take $\beta\in k-\{0,\alpha\}$ and investigate $m(t-\beta)$. There are $\binom{6}{3}=20$ sums of triples of roots to consider, so it appears that $|k|\geq 21$ suffices for our purposes. In fact, there is one more case to avoid: since $\sum_{i=1}^6 r_i=-1$, we have $\sum_{i=1}^6(r_i+\frac{1}{6})=0$. By assuming $|k|\geq 23$, we guarantee that there exists $\alpha\in k-\{-\frac{1}{6}\}$ such that $m(t-\alpha)$ is separable and irreducible, has no three roots summing to zero, and has all six roots not summing to zero. We then set $G(t)=m(t-\alpha)$.

Now assume $\Char{k}=3$. Since $|k|\geq 23$, we have $|k|\geq 27$ in characteristic 3. Let $m(t)=t^6+a_5t^5+\ldots+a_0\in k[t]$ be a separable irreducible polynomial with $a_5\neq 0$. As before, let $r_1,\ldots,r_6$ be the roots of $m(t)$. If no three roots of $m(t)$ sum to zero, then we are done. Otherwise, we will work with the scaled reciprocal of $m(t)$ (possibly after shifting). If $a_1\neq 0$, let $m^*(t):=a_0^{-1}t^6\cdot m(t^{-1})$ be the scaled reciprocal of $m(t)$. If $a_1=0$, then note that the degree 1 coefficient of $m(t-\alpha)$ is $2a_5\alpha^4-a_4\alpha^3-2a_2\alpha=\alpha(2a_5\alpha^3-a_4\alpha^2-2a_2)$. The sum of any three roots of $m(t-\alpha)$ is of the form $(r_i+\alpha)+(r_j+\alpha)+(r_\ell+\alpha)=r_i+r_j+r_\ell$ (since we are in characteristic 3). By choosing $\alpha\in k^\times$ not a root of $2a_5t^3-a_4t^2-2a_2$, we may assume that the linear term of $m(t)$ is non-zero, so that the degree 5 coefficient of $m^*(t)$ is non-zero.

The roots of $m^*(t)$ are $r_1^{-1},\ldots,r_6^{-1}$. The assumption that $r_1+r_2+r_3=0$ implies that $r_1=-r_2-r_3$, so
\begin{align*}
(r_1r_2r_3)(r_1^{-1}+r_2^{-1}+r_3^{-1})&=r_1r_2+r_1r_3+r_2r_3\\
&=-(r_2+r_3)^2+r_2r_3\\
&=-(r_2^2+r_2r_3+r_3^2)\\
&=-(r_2-r_3)^2.
\end{align*}
Since $r_2\neq r_3$ by the separability of $m(t)$, we deduce that $r_1^{-1}+r_2^{-1}+r_3^{-1}\neq 0$. If $r_i^{-1}+r_j^{-1}+r_\ell^{-1}=0$ for some $i,j,\ell$, then consider the reciprocal polynomial of $m(t-\alpha)$. The sum of any three roots is of the form
\begin{align*}
\frac{1}{r_i+\alpha}+\frac{1}{r_j+\alpha}+\frac{1}{r_\ell+\alpha}&=\frac{r_ir_j+r_ir_\ell+r_jr_\ell+2\alpha(r_i+r_j+r_\ell)+3\alpha}{(r_i+\alpha)(r_j+\alpha)(r_\ell+\alpha)}\\
&=\frac{r_ir_j+r_ir_\ell+r_jr_\ell+2\alpha(r_i+r_j+r_\ell)}{(r_i+\alpha)(r_j+\alpha)(r_\ell+\alpha)}\\
&=\frac{r_ir_j+r_ir_\ell+r_jr_\ell-\alpha(r_i+r_j+r_\ell)}{(r_i+\alpha)(r_j+\alpha)(r_\ell+\alpha)}.
\end{align*}
As we have seen, if $r_i+r_j+r_\ell=0$, then $r_ir_j+r_ir_\ell+r_jr_\ell\neq 0$, and the converse holds as well. It remains to ensure that $r_ir_j+r_ir_\ell+r_jr_\ell\neq\alpha(r_i+r_j+r_\ell)$ when $r_i+r_j+r_\ell\neq 0$. There are $\binom{6}{3}=20$ sums of triples to consider. Because $|k|>24$, we can pick $\alpha\in k^\times-\{\rho:2a_5\rho^3+a_4\rho^2+2a_2=0\}$ such that $r_ir_j+r_ir_\ell+r_jr_\ell\neq\alpha(r_i+r_j+r_\ell)$ for all $i,j,\ell$. Let $G(t)$ be the scaled reciprocal of $m(t-\alpha)$. Our choice of $\alpha$ ensures that the degree 5 coefficient of $G(t)$ is non-zero and that no three roots of $G(t)$ sum to zero, as desired.
\end{proof}

\section{Some fields with all line counts}\label{sec:specific}
Using Theorem~\ref{thm:main}, we can understand the set of line counts for smooth cubic surfaces over a given field by looking at the field's Galois theory. For example, since finite fields admit finite (separable) extensions of arbitrary degree, every line count must be realized over finite fields of cardinality at least 23 (reproving Loughran and Trepalin's classification of line counts over finite fields in this range \cite{LT19}). In order to prove Corollary~\ref{cor:fin-gen}, it suffices to show that finitely generated fields and finite transcendental extensions of arbitrary fields each admit separable extensions of arbitrary degrees.

\begin{lem}
Let $k$ be a finitely generated field or a finite transcendental extension of another field. Then for each integer $n>0$, there exists a finite separable extension $k'$ of $k$ with $[k':k]=n$.
\end{lem}
\begin{proof}
If $k$ is a finitely generated field, then let $k_0$ be its prime field (i.e.~$\mb{Q}$ if $\Char{k}=0$ and $\mb{F}_p$ if $\Char{k}=p$). If $k/k_0$ is finite, then $k$ is a number field in characteristic 0 or of the form $\mb{F}_q$ in positive characteristic. In the latter case, take $k'=\mb{F}_{q^n}$. In the former case, let $\mc{O}$ be the ring of integers of $k$, and let $u\in\mc{O}$ be an irreducible non-zero non-unit (such as the uniformizer of a prime ideal). Then there is no element $s\in\mc{O}$ such that $s^2=u$, so $m(t)=t^n+ut+u$ is irreducible in $k[t]$ by Eisenstein's criterion and Gauss's lemma. It thus suffices to set $k'$ to be the splitting field of $m(t)$.

Now suppose $k/k_0$ is not finite. Since $k$ is finitely generated, there exist generators $z_1,\ldots,z_m$ such that $k=k_0(z_1,\ldots,z_m)$. Since $k/k_0$ is not finite, at least one of $z_1,\ldots,z_m$ is transcendental over $k_0$. By reordering if necessary, we may assume that $z_m$ is transcendental over $k_0(z_1,\ldots,z_{m-1})$. Let $F=k_0(z_1,\ldots,z_{m-1})$. Consider $R=F[z_m]$, which is a UFD (since it is a polynomial ring over a field) whose fraction field is $k$. The assumption that $z_m$ is transcendental over $F$ implies that $R$ is not a field, so we can pick a non-zero non-unit $g\in R$. Let $u$ be an irreducible factor of $g$. Then $m(t)=t^n+ut+u$ is irreducible over $R$ by Eisenstein's criterion and hence irreducible over $k$ by Gauss's lemma. Moreover, $m'(t)$ is not identically zero, so $m(t)$ is separable. The splitting field $k'$ of $m(t)$ is thus a degree $n$ separable extension of $k$.

Finally, if $k$ is a finite transcendental extension of some field $k_0$, then there exist transcendental elements $z_1,\ldots,z_m$ such that $k=k_0(z_1,\ldots,z_m)$. We may thus repeat the arguments of the previous paragraph to obtain the desired extension $k'/k$.
\end{proof}

\appendix
\section{Subgroups of $W(\mathrm{E}_6)$}\label{sec:magma}
The following Magma code, provided to us by Dan Loughran, classifies all possible line counts on a smooth cubic surface over any field by considering all conjugacy classes of subgroups of $W(\mathrm{E}_6)$. This provides a modern proof of Theorem~\ref{thm:lines-over-k}. This code is a variant of the freely available code accompanying~\cite{JL15,BFL19}.

\begin{lstlisting}
R_e6 := RootDatum("E6");
Cox_e6 := CoxeterGroup(R_e6);
we6 := StandardActionGroup(Cox_e6);
list:=SubgroupClasses(we6);

number_of_lines := function(G);
temp:=0;
for O in Orbits(G) do
 if #O eq 1 then
  temp:=temp+1;
 end if;
end for;
return temp;
end function;

for rec in list do
G := rec`subgroup;
print Order(G),number_of_lines(G);
end for;
\end{lstlisting}

\bibliography{rational-lines-cubic}{}
\bibliographystyle{alpha}
\end{document}